\pgfplotsset{compat=newest}
\newcommand{\bp}{\mathbf p}
\newcommand{\bb}{\mathbf b}
\newcommand{\bx}{\mathbf x}
\newcommand{\by}{\mathbf y}
\newcommand{\bz}{\mathbf z}
\newcommand{\be}{\mathbf e}
\newcommand{\bq}{\mathbf q}
\newcommand{\bg}{\mathbf g}
\newcommand{\boldm}{\mathbf m}
\newcommand{\kl}[1]{\left(#1\right)}
\newcommand{\berr}{\bm{\eta}}
\newcommand{\edot}{\,\cdot\,}
\newcommand{\set}[1]{\left\{#1\right\}}
\newcommand{\norm}[1]{\left\Vert#1\right\Vert}
\newcommand{\abs}[1]{\left\vert#1\right\vert}
\newcommand\la{\lambda}
\newcommand\eps{\epsilon}
\newcommand\rmd{\mathrm d}
\newcommand{\N}{\mathbb{N}}
\newcommand{\R}{\mathbb{R}}
\newcommand{\C}{\mathbb{C}}
\newcommand{\bbS}{\mathbb{S}}
\newcommand{\ee}{\mathrm{e}}
\newcommand{\dx}{\mathrm{d}}
\DeclareMathOperator{\Mo}{\mathbf M}
\DeclareMathOperator{\Ho}{\mathbf H}
\newcommand{\Ncal}{\mathcal{N}}
\DeclareMathOperator{\Psio}{\boldsymbol\Psi}
\DeclareMathOperator*{\minimize}{minimize}
\DeclareMathOperator*{\suchthat}{such\,that}
\newcommand{\To}{\mathbf{T}}
\newcommand{\Ao}{\mathbf{A}}
\newcommand{\Ocal}{\mathcal{O}}
\newcommand{\supp}{\operatorname{supp}}
\newtheorem{theorem}{Theorem}
\theoremstyle{definition}
\newtheorem{definition}[theorem]{Definition}
\numberwithin{theorem}{section}
\numberwithin{figure}{section}
\title{A Novel Compressed Sensing Scheme for Photoacoustic Tomography}
\date{January 2015}
\author{M. Sandbichler\footnotemark[2]
\and F. Krahmer\footnotemark[3]
\and T.  Berer\footnotemark[4]
\and P. Burgholzer\footnotemark[4]
\and M. Haltmeier\footnotemark[2]}
\begin{document}
\maketitle

\begin{abstract}
Speeding up the data acquisition is one of the central aims to advance  tomographic imaging. On the one hand, this reduces motion artifacts due to undesired movements, and on the other hand this decreases the examination time for the patient. In this article, we propose a new scheme for speeding  up the data collection process in photoacoustic tomography.
Our proposal is based on compressed sensing and reduces  acquisition time and system costs while maintaining image quality.
As measurement data we  use random combinations of pressure values that we use to recover a complete set of pressure data prior to the actual image reconstruction.
We obtain theoretical recovery guarantees for our compressed sensing scheme and support the theory by reconstruction results on simulated data as well as on experimental data.

\bigskip\noindent{\bf Keywords.}
 Photoacoustic imaging, computed tomography, compressed sensing,  lossless expanders, wave equation.

\bigskip\noindent{\bf AMS classification numbers.}
45Q05, 94A08, 92C55.
\end{abstract}

\section{Introduction}
\label{sec:intro}

Photoacoustic tomography (PAT) is a recently developed non-invasive medical imaging technology whose benefits combine the high contrast of
pure optical imaging with the high spatial resolution of pure ultrasound imaging \cite{Bea11,Wan09b,XuWan06}.
In order to speed up the measurement process, in this paper we propose a novel compressed sensing approach for PAT that
uses random combinations of the induced pressure as measurement data.
The proposed strategy yields recovery guarantees and furthermore comes with an efficient numerical implementation allowing high resolution real time  imaging.
We thereby focus on a variant of PAT using integrating line detectors proposed in \cite{BurHofPalHalSch05,PalNusHalBur07a,BerEtAl12}.
Our strategy, however,  can easily be  adapted  to  more classical PAT setups using arrays of point-like detectors.

\begin{figure}[tbh!]
\begin{center}
    \includegraphics[width=\textwidth]{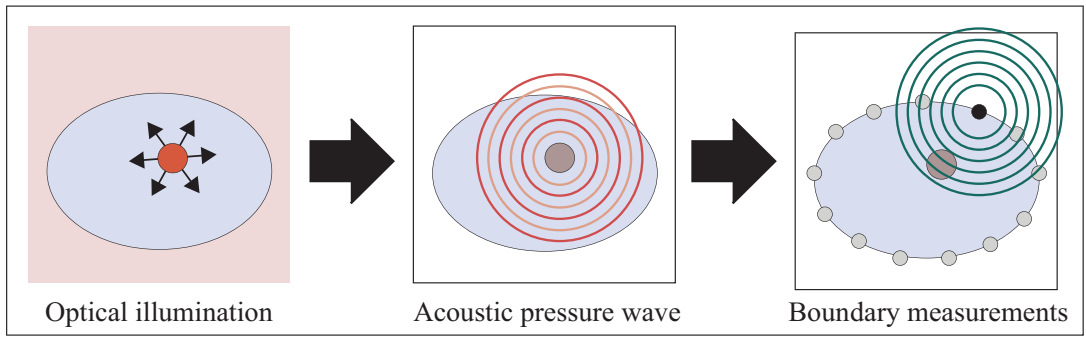}
\end{center}
\caption{\textsc{Basic\label{fig:pat} principle of PAT}. A semi-transparent sample is illuminated with a  short optical pulse that induces an acoustic pressure wave.
The induced  pressure  is measured outside of the sample and used to recover an image of the interior.}
\end{figure}

Our proposal is based on the main components of  compressed sensing,
namely randomness and sparsity.
Compressed sensing is one of the most influential discoveries in applied mathematics and signal processing of the past decade \cite{candes2006robust,donoho2006compressed}.
By combining  the  benefits of data compression and data acquisition
it allows to recover a signal from far fewer linear measurements than suggested by Shannon's sampling theorem.
It has led to several new proposed sampling strategies in medical imaging, for example for  speeding up MRI data acquisition (see \cite{lustig2007sparse, lustig2008compressed}) or completing under-sampled CT images~\cite{chen2008prior}. Another prominent application of compressed sensing is the single pixel camera  (see \cite{DuaetAl08}) that
circumvents the use of several expensive high resolution sensors in digital photography.

\subsection{Photoacoustic tomography (PAT)}

PAT is based on the generation of acoustic waves by illuminating a semi-transparent  sample  with short optical pulses (see Figure \ref{fig:pat}).
When the sample is illuminated with a short laser pulse, parts of the  optical energy become absorbed.
Due to thermal expansion a subsequent  pressure wave  is generated depending on the structure  of the sample.
The induced  pressure waves  are recorded outside of the  sample and used to recover an image of the interior, see \cite{BurBauGruHalPal07,KucKun08,XuWan06,Wan09b}.

\begin{figure}[htb!]
\floatbox[{\capbeside\thisfloatsetup{capbesideposition={right,bottom},capbesidewidth=0.45\textwidth}}]{figure}[\FBwidth]
{\caption{\textsc{PAT with integrating line detectors.}
An array of integrating line detectors measures  integrals of the induced acoustic
pressure over a certain number of  parallel lines. These data are used to recover a linear projection of the object in the first step.
By rotating the array of line detectors around a single axis several projection  images are obtained and used to recover the actual three dimensional object in a second step.}
\label{fig:IntegratingPAT}}
{ \includegraphics[width=0.35\textwidth]{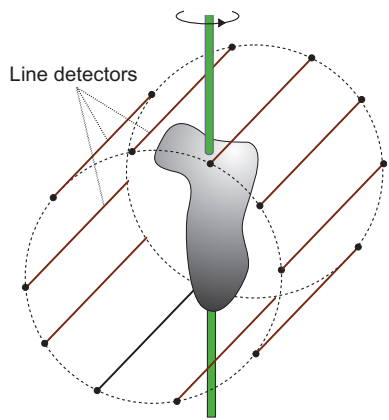}\quad}
\end{figure}

In this paper, we consider a special variant of photoacoustic tomography that uses integrating line detectors for recording the pressure waves, as proposed in \cite{BurHofPalHalSch05}.
As illustrated in Figure \ref{fig:IntegratingPAT}, an array of line detectors is arranged around the investigated sample and  measures integrals of the
pressure wave  over a certain number of parallel lines.
Assuming  constant speed of sound, the pressure integrated along the direction  of the line detectors satisfies the  two dimensional wave equation
\begin{equation}  \label{eq:wave}
	\left\{ \begin{aligned}
	 \partial^2_t p (x,t)  - \Delta p(x,t)
	&=
	0 \,,
	 && \text{ for }
	\kl{x,t} \in
	\R^2 \times \kl{0, \infty}
	\\
	p\kl{x,0}
	&=
	f(x) \,,
	&& \text{ for }
	x  \in \R^2
	\\
	\partial_t
	p\kl{x,0}
	&=0 \,,
	&& \text{ for }
	x  \in \R^2 \,,
\end{aligned} \right.
\end{equation}
where the time scaling is chosen in such a way that  the speed of sound is normalized to one.
The initial datum $f$ in  \eqref{eq:wave} is  the two dimensional projection image
of the actual, three dimensional initial pressure distribution.

Image reconstruction  in PAT with integrating line detectors can be performed via a two-stage approach \cite{BurBauGruHalPal07,PalNusHalBur07b}.
In the first step the measured pressure values corresponding to values of the solution of \eqref{eq:wave} outside the support of $f$ are used to reconstruct a  linear projection (the initial data in \eqref{eq:wave}) of the three dimensional initial pressure distribution. This procedure is repeated by rotating the array of  line detectors around a single axis which yields  projection images for several angles. In a second step these projection images are used to recover the  three dimensional initial pressure distribution by  inverting  the classical Radon transform.
In this work we  focus on the first problem of reconstructing the (two dimensional)
initial pressure distribution $f$ in \eqref{eq:wave}.

Suppose that the integrating line detectors are arranged on the surface of a  circular cylinder of radius $\rho>0$, and that the object is located inside that cylinder (see Figure \ref{fig:IntegratingPAT}).
The data measured by the array of line detectors are then modeled by
\begin{align} \label{eq:data}
    & p_j := p\kl{ z_j, \edot}
    \colon [0,2\rho]  \to \R   \,,
    \\ \label{eq:sampling}
	&
	z_j
	:=  \begin{pmatrix} \rho \cos \kl{2\pi(j-1)/N} \\
	 \rho\sin \kl{2\pi(j-1)/N}  \end{pmatrix}\,,
\quad \text{ for }  j = 1, \dots,  N   \,,
\end{align}
where $p_j $ is the pressure signal corresponding to the $j$-th line detector.
Since the two dimensional initial pressure  distribution $f$ is supported in a disc of radius $\rho$
and the speed of sound is constant and normalized to one, no additional information is  contained in data
$p\kl{ z_j, t}$ for $t > 2\rho$. This can be seen, for example by exploiting the
explicit relations between the two dimensional pressure signals $p\kl{ z_j, \edot}$
and  the spherical means (compare with Subsection~\ref{sec:sparsifying})
of the initial pressure distribution; see \cite{PalNusHalBur09}.

Of course, in practice also the functions $p_j \colon [0,2\rho]  \to \R$ have to be
represented by discrete  samples.  However, temporal samples can easily be collected at a high sampling rate compared to the spatial sampling, where each sample requires a separate sensor. It is therefore natural to consider  the semi-discrete data model \eqref{eq:data}.
Our compressed PAT scheme could easily be adapted to a fully discretized data model.

\subsection{Compressed sensing PAT}

When using data of the form~\eqref{eq:data}, high resolution imaging requires the number $N$ of detector locations to be sufficiently large. As the fabrication of an array of parallel line detectors is demanding, most experiments using  integrating line detectors have been carried out using a single line detector, scanned on circular paths using scanning stages~\cite{NusJBO10, GruJBO10}. Recently, systems using arrays of $64$ parallel line detectors have been demonstrated~\cite{GraPSP14, BauPSP15}. The most costly building blocks
in such devices are the analog to digital converters (ADC). For completely parallel readout, a separate ADC is required for every detector channel. In order to reduce costs, in these practical implementations two to four line detector channels are multiplexed to one ADC.  For collecting the complete pressure data, the measurements  have to be performed two (respectively four) times, because only 32 (respectively 16) of the 64 line measurements can be  read out in parallel.  This, again, leads to an increased overall measurement time. For example, using an excitation laser with a repetition rate of $\unit[10]{Hz}$ and two times multiplexing, a measurement time of
$\unit[0.2]{s}$ for a projection image has been reported in~\cite{GraPSP14}. Without multiplexing,
this  measurement time would reduce to the half.

In order to speed up the scanning process and to reduce system costs, in this paper we propose a novel compressed sensing approach that allows to perform a smaller number of random measurements with a reduced  number of ADCs, while retaining high spatial resolution.
For that purpose, instead of collecting
individually  sampled data $p_j(t)$ as in \eqref{eq:data}, we  use random combinations
\begin{equation} \label{eq:cs1}
	y_i(t)  =  \sum_{j \in J_i} p_j\kl{t}
	\quad \text{ for } i \in \set{ 1, \dots,  m} \text{ and } t \in  [0,2\rho] \,,
\end{equation}
where $m \ll N$ is the number of compressed sensing measurements and $J_i \subset \{ 1, \dots, N\}$ corresponds
to the random set of  detector locations contributing to the $i$-th measurement.
In the  reconstruction process  the  random linear combinations $y_i(t)$ are used to recover the full set of pressure data $ p_1(t), \dots, p_N(t)$ using compressed
sensing techniques. The initial pressure distribution $f$ in \eqref{eq:wave}  is
subsequently recovered from the completed pressure data by applying standard PAT reconstruction algorithms such as time reversal \cite{BurMatHalPal07,HriKucNgu08,Treeby10} or filtered backprojection \cite{BurBauGruHalPal07,finch2007inversion,FinPatRak04,Hal14,Kun07a,XuWan05}.

A naive approach for recovering the pressure data from the random measurements $y_i(t)$  would be to solve  \eqref{eq:cs1} for $p_j(t)$
separately for each $t\in [0,2\rho]$. Since $m \ll N$, this is a severely underdetermined system of linear equations and its solution requires appropriate prior knowledge of the unknown  parameter vector.  Compressed sensing suggests to use the sparsity of the parameter vector in a suitable basis  for that purpose.
However, recovery guarantees for zero/one measurements of the type \eqref{eq:cs1} are basis-dependent and  require  the parameter to be  sparse in the standard basis rather than sparsity in a different basis such as orthonormal wavelets (see Subsection \ref{sec:lossless}). However, for pressure signals \eqref{eq:data} of practical relevance such sparsity assumption in the original basis does not hold.

In this work we therefore propose a different approach for solving  \eqref{eq:cs1} by  exploiting special properties of  the data in PAT.
For that purpose we apply a transformation that acts in the temporal variable only, and makes the transformed pressure values  sufficiently  sparse in the spatial (angular) component.
In Subsection \ref{sec:sparsifying} we present an example of such a transform.
The application of  a sparsifying transform  to \eqref{eq:cs1} yields linear equations with unknowns being sparse in the angular variable.
It  therefore allows to apply sparse recovery results for the zero/one
measurements under consideration.

 \subsection{Relations to previous work}

A different compressed sensing approach for  PAT  has been considered in~\cite{provost2009application,guo2010compressed}. In these articles, standard point
samples  (such as  \eqref{eq:data}) have been used as measurement data and no  recovery guarantees have been derived.
Further, in  \cite{provost2009application,guo2010compressed} the phantom is directly reconstructed  from the incomplete data, whereas we first  complete the data using sparse recovery techniques.
Our approach is more related to  a compressed sensing approach for PAT using a planar detector array that has been proposed in~\cite{huynh2014patterned} and also uses random zero/one combinations of pressure values  and recovers the complete pressure prior to the actual image reconstruction. However, in \cite{huynh2014patterned} the sparsifying transform is applied in spatial domain where recovery guarantees are not available as noted above. We finally notice that our proposal of using a sparsifying temporal transform can easily be extended to planar detector arrays in two or three spatial dimensions; compare Section~\ref{sec:discussion}.

\subsection{Outline}

The rest of this paper is organized as follows. In Section~\ref{sec:background} we review  basic results from compressed sensing  that we  require for our proposal. We therefore focus on recovery guarantees for zero/one matrices modeled by lossless expanders; see Subsection \ref{sec:lossless}.  In Section~\ref{sec:cspat} we present the mathematical framework of the proposed PAT compressed sensing scheme. The   sparsity in the spatial variable, required for $\ell^1$-minimization, is obtained by applying a transformation acting in the temporal variable.       
An example of such a transformation is given in Subsection~\ref{sec:sparsifying}. In Section~\ref{sec:num} we present numerical results supporting our theoretical investigations.
The paper concludes  with a short discussion in
Section~\ref{sec:discussion}.

\section{Background from compressed sensing}
\label{sec:background}

In this section we shortly review basic concepts and results of compressed sensing
(sometimes also termed compressive sampling). Our main focus will be on recovery results
for lossless expanders, which are the basis of our PAT compressed
sensing scheme.

\subsection{Compressed sensing}\label{sec:cs}

Suppose one wants to sense a high dimensional data vector $\bx = (x_1, \dots, x_N )\in \R^N$,
such as a digital image. The classical sampling approach is to measure each component of $x_i$
individually. Hence, in order to collect the whole data vector one has to perform $N$ separate measurements,
which may be too costly.
On the other hand it is well known and the basis of data compression algorithms, that many datasets are compressible in a suitable basis. That is, a limited amount of information is sufficient to capture the high dimensional vector $\bx$.

Compressed sensing incorporates this compressibility observation into the sensing mechanism \cite{candes2006robust,CanTao06,donoho2006compressed}.
Instead of measuring  each coefficient of the data vector  individually, one collects linear measurements
\begin{equation}\label{eq:system1a}
	\Ao \bx   = \by \,,
\end{equation}
where $\Ao  \in \R^{m \times N}$ is the measurement matrix with $m \ll N$,
and $\by =(y_1, \dots, y_m) \in \R^m$ is the
measurement vector. Any component of the data vector can be interpreted as a scalar linear  measurement performed on the unknown $\bx$, and the assumption $m \ll N$ means that far  fewer measurements than parameters  are available.
As $m \ll N$, the system \eqref{eq:system1a} is highly   underdetermined and cannot be uniquely solved (at least without  additional information) by standard linear algebra.

Compressed sensing overcomes this obstacle by utilizing randomness and sparsity.
Recall that  the vector $\bx  = (x_1, \dots, x_N)$ is called $s$-sparse if the support
\begin{equation} \label{eq:sparsitya}
	\supp \kl{\bx} :=  \set{j \in \set{1, \dots , N}
	\colon x_j \neq 0 }
\end{equation}
contains at most $s$ elements.
Results from compressed sensing
state that for suitable $\Ao$, any $s$-sparse $\bx \in \R^n$ can be found
via the optimization problem
\begin{equation} \label{eq:CSell1}
\begin{aligned}
	&\minimize_{\bz \in \R^N}
	&&\norm{\bz}_1 = \sum_{j=1}^N \abs{z_j}
	\\
	&\suchthat
	&& \Ao \bz  =  \by   \,.
\end{aligned}
\end{equation}
By relaxing the equality constraint $\Ao \bz  =  \by$, the optimization problem \eqref{eq:CSell1}
can be adapted to data which are only approximately sparse and noisy \cite{CanRomTao06b}.

A sufficient condition to guarantee recovery is the so called \emph{restricted isometry property} (RIP), requiring that for any $s$-sparse vector $x$, we have
\begin{equation*}
(1-\delta)\Vert \bx\Vert_2\leq \Vert \Ao \bx \Vert_2 \leq (1+\delta) \Vert \bx\Vert_2
\quad \text{ for some small } \delta \in (0,1)\,.
\end{equation*}
The smallest constant $\delta$ satisfying this inequality is called the $s$-restricted isometry constant of $A$ and denoted by $\delta_s$. Under certain conditions on $\delta_s$, recovery guarantees for sparse and approximately sparse data can be obtained, see for example~\cite{candes2008restricted,cai2014sparse}.

While the restricted isometry itself is deterministic, to date all constructions that yield near-optimal embedding dimensions $m$ are based on random matrices.
Sub-gaussian random matrices satisfy  the RIP with high probability for an order-optimal embedding dimension $m=\Ocal(s\log(N/s))$, see e.g.~\cite{baraniuk2008simple}.
Partial random Fourier matrices (motivated by MRI measurements) and subsampled random convolutions (motivated by remote sensing) have been shown to allow for order-optimal embedding dimensions up to logarithmic factors, see~\cite{rudelson2008sparse, rauhut2007random} and~\cite{rarotr12, KMR12}, respectively.

The sparsity is often not present in the standard basis of $\R^N$, but in a special \emph{sparsifying} basis,
such as wavelets. For matrices with subgaussian rows this does not cause a problem, as the rotation invariance of subgaussian vectors ensures that after incorporating an orthogonal transform, the resulting random matrix construction still yields RIP matrices with high probability. As a consequence, the sparsifying basis need not be
known for  designing the measurement matrix $\Ao$, which is often referred to as
universality of such  measurements \cite{baraniuk2008simple}.

Many structured random measurement systems including the partial random Fourier and the subsampled random convolution scenarios mentioned above, however, do not exhibit universality. For example, one can easily see that subsampled Fourier measurements cannot suffice if the signal is sparse in the Fourier basis. While it has been shown that this problem can be overcome by randomizing the column signs \cite{krwa11}, such an alteration often cannot be implemented in the sensing setup.  Another way to address this issue is by requiring incoherence between the measurement basis and the sparsity basis \cite{CaRo07}. That is, one needs that inner products between vectors of the two bases are uniformly small. If not all, but most of these inner products are small, one can still recover, provided that one adjusts the sampling distribution accordingly; this scenario includes the case of Fourier measurements and Haar wavelet sparse signals \cite{KW13}.

Incoherence is also the key to recovery guarantees for gradient sparse signals. Namely, many natural images are observed to have an approximately sparse  discrete gradient. As a consequence, it has been argued using a commutation argument that one can recover the signal from uniformly subsampled Fourier measurements via minimizing the $\ell^1$ norm of the discrete gradient, the so-called total variation (TV) \cite{candes2006robust}.
TV minimization had already proven to be a favorable method in image processing, see, for example \cite{RudOshFat92, chambolle1997image}.
A problem with this approach is that the compressed sensing recovery guarantees then imply good recovery of the gradient, not of the signal itself. Small errors in the gradient, however, can correspond to substantial errors in the signal, which is why this approach can only work if no noise is present. A refined analysis that allows for noisy measurements requires the incoherence of the measurement basis to the Haar wavelet basis \cite{needell2013stable}. Again, TV minimization is considered for recovery. By adjusting the sampling distribution, these results have also been shown to extend to Fourier measurements and other systems with only most measurement vectors incoherent to the Haar basis \cite{KW13}.

For the measurement matrices considered in this work, namely  zero/one matrices
based on expander graphs, recovery guarantees build on an $\ell^1$-version of the restricted isometry property, namely one requires that
	\[(1-\delta)\Vert \bx\Vert_1\leq \Vert \Ao \bx \Vert_1 \leq (1+\delta) \Vert \bx\Vert_1\]
for all sufficiently sparse $\bx$ and some constant $\delta>0$;
see \cite{berinde2008combining} and Subsection~\ref{sec:lossless} below.
As the $\ell^1$-norm is not rotation invariant, basis transformations
typically destroy this property.
That is, not even incoherence based recovery guarantees are available; recovery results only hold in the standard basis. Thus an important aspect of our work will be to ensure (approximate) sparsity in the standard basis. This will be achieved by applying a particular transformation in the time variable.

\subsection{Recovery results for lossless expanders}
\label{sec:lossless}

Recall that we seek recovery guarantees for a measurement setup, where each detector is switched on exactly $d$ out of $m$ times. That is, one obtains a binary measurement matrix $\Ao \in \{0,1\}^{m\times N}$ with exactly $d$ ones in each column.
It therefore can be interpreted as the adjacency matrix of a left $d$-regular bipartite graph.
Under certain additional conditions, such a bipartite graph is a lossless expander (see Definition~\ref{def:expander}) which, as we will see, guarantees stable recovery of sparse vectors. Expander graphs have been used since the 1970s in theoretical computer science, originating in switching theory from modeling networks connecting many users, cf.~\cite{klawe1984limitations} for further applications. They have also been useful in measure theory, where it was possible to solve the Banach-Ruziewicz problem using tools from the construction of expander graphs, see~\cite{lubotzky1994discrete} for a detailed examination of this connection. For a survey on expander graphs and their applications, see for example~\cite{hoory2006expander, lubotzky2012expander}.

Compressed sensing with expander graphs has been considered in~\cite{berinde2008combining,berinde2008practical,indyk2008near,jafarpour2009efficient,xu2007efficient}, where also several efficient algorithms for the solution of compressed sensing problems using expander graphs have been proposed. A short review
of sparse recovery algorithms using expander like matrices is given in~\cite{gilbert2010sparse}.
In this subsection we recall main compressed sensing results using lossless expanders as presented in  the recent monograph \cite{foucart2013mathematical}, where the proofs of all mentioned theorems can be found in Section~13.

\begin{figure}[tbh!]
\begin{center}
    \includegraphics[width=0.4\textwidth]{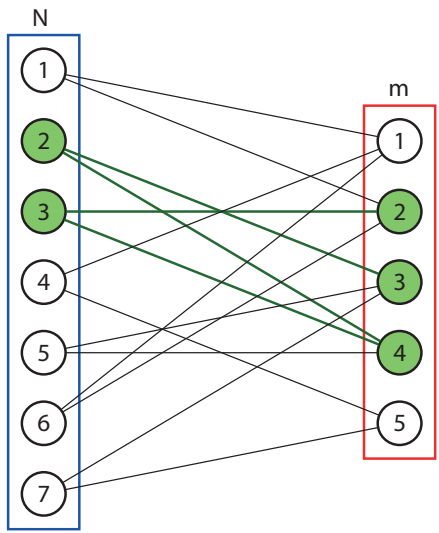}
\end{center}
\caption{Example\label{fig:regular} of a left $d$-regular bipartite graph with $d=2$, $N=7$ left vertices, and $m=5$ right vertices. Here $d=2$ because exactly $2$ edges emerge at each left vertex. For $J=\set{2,3}$ the set of right vertices connected to $J$ is given by $R(J) = \set{2,3,4}$.}
\end{figure}

Recall that a  bipartite graph consists of a triple $(L,R,E)$,  where $L$ is the set of left vertices, $R$ the set of right vertices, and $E \subset L \times R$ is the set of edges.  Any element  $(j,i) \in E$ represents an edge with left vertex $j\in L$ and right vertex $i \in R$.

A bipartite graph $(L,R,E)$ is called $d$-regular for some $d \geq 0$,  if for every given left vertex $j\in L$, the number of edges $(j,i) \in E$ emerging from  $j$ is exactly equal to $d$.  Finally, for any subset $J \subset  L$ of left vertices, let
\begin{equation*}
	R(J) :=
	\set{ i \in R \colon
	 \text{ there exists some } j \in J
	 \text{ with }  (j, i)\in E
	  }
\end{equation*}
denote the set of right vertices connected to $J$.
Obviously,  for any left $d$-regular bipartite graph and any $J \subset  L$, we have  $|R(J)|\leq d|J|$.

\begin{definition}[Lossless expander]\label{def:expander}
Let $(L, R, E)$ be a left $d$-regular bipartite graph,   $s \in \N$, and $\theta \in [0,1]$.
Then $(L, R, E)$ is called an $\kl{s,d,\theta}$-lossless expander, if
\begin{equation}\label{eq:expander}
	|R(J)|
	\geq
	(1-\theta) d |J|
	\quad \text{ for all } J\subset L  \text{ with }
	|J| \leq s  \,.
\end{equation}
For any left $d$-regular bipartite graph, the smallest number $\theta\geq 0$ satisfying \eqref{eq:expander} is called the $s$-th  restricted expansion constant and denoted by $\theta_s$.
\end{definition}

The following theorem states that a randomly chosen $d$-regular bipartite graph will be
a lossless expander with high probability.

\begin{theorem}[Regular bipartite graphs are expanders with high probability] \label{thm:expander}
For every $0 < \eps < \frac12$, every $\theta \in (0,1)$ and every  $s \in \N$, the  proportion of $(s,d,\theta)$-lossless expanders
among  the set of all left $d$-regular bipartite graphs  having $N$ left vertices and $m$ right vertices exceeds
$1- \eps$, provided that
\begin{equation} \label{eq:expanderchoice}
d = \left\lceil{\frac1\theta \ln\left(\frac{\ee N}{\eps s}\right)}\right\rceil\quad\text{and}\quad
m\geq c_{\theta} s \ln\left(\frac{\ee N}{\eps s}\right) \,.
\end{equation}
Here $c_{\theta}$ is a constant only depending  on $\theta$, $\ee$ is Euler's constant,
$\ln(\edot) $ denotes  the natural logarithm and $\lceil x\rceil$ denotes the smallest integer larger or equal to $x$.
\end{theorem}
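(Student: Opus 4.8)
The plan is a union bound over all ``small'' left vertex sets, with a collision-counting estimate supplying the per-set failure probability. First I would fix the random model: a uniformly random left $d$-regular bipartite graph is obtained by choosing, independently for each left vertex, a uniformly random $d$-element subset of $\set{1,\dots,m}$ as its set of neighbours. Fix a subset $J\subset L$ with $|J|=k$, where $1\le k\le s$, and reveal the $dk$ edges leaving $J$ one at a time; call a revealed edge a \emph{collision} if its right endpoint coincides with that of an earlier revealed edge out of $J$. Then $|R(J)|=dk-(\text{number of collisions})$, so \eqref{eq:expander} can fail for $J$ only if more than $\theta dk$ collisions occur. At every step at most $dk\le ds\le m$ right vertices have already been hit, and the endpoint being revealed is uniform over at least $m-d$ vertices; a short computation, using $dk\le m$, then bounds the conditional collision probability by $dk/m$. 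Hence, writing $\ell:=\lceil\theta dk\rceil$ and using $\binom{n}{\ell}\le(\ee n/\ell)^{\ell}$,
\[
  \Pr\bigl[\,|R(J)|<(1-\theta)dk\,\bigr]\;\le\;\binom{dk}{\ell}\Bigl(\frac{dk}{m}\Bigr)^{\ell}\;\le\;\Bigl(\frac{\ee\,dk}{\theta m}\Bigr)^{\theta dk},
\]
the last inequality being valid in the regime \eqref{eq:expanderchoice} (where $\ee\,dk/(\theta m)<1$).

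Next I would union bound over all left subsets of size $1,\dots,s$; with $\binom{N}{k}\le(\ee N/k)^{k}$ this gives
\[
  \Pr\bigl[(L,R,E)\text{ is not an }(s,d,\theta)\text{-lossless expander}\bigr]\;\le\;\sum_{k=1}^{s}\Bigl[\frac{\ee N}{k}\Bigl(\frac{\ee\,dk}{\theta m}\Bigr)^{\theta d}\Bigr]^{k}\;=\;\sum_{k=1}^{s}\Bigl[\ee N\Bigl(\frac{\ee\,d}{\theta m}\Bigr)^{\theta d}k^{\theta d-1}\Bigr]^{k}.
\]
The key observation is that, since $\theta d\ge\ln(\ee N/(\eps s))\ge\ln(\ee/\eps)>1$, the base in square brackets is increasing in $k$, so it suffices to ensure the $k=s$ base is at most $\eps/2$: then every base is at most $\eps/2$ and the sum is dominated by the geometric series $\sum_{k\ge1}(\eps/2)^{k}\le\eps$. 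For $k=s$ the base equals $(\ee N/s)\,(\ee\,ds/(\theta m))^{\theta d}$. Substituting $d=\lceil\theta^{-1}\ln(\ee N/(\eps s))\rceil$ and $m\ge c_{\theta}s\ln(\ee N/(\eps s))$ yields $\ee\,ds/(\theta m)\le 2\ee/(\theta^{2}c_{\theta})=:\delta$, which is $\le\ee^{-2}$ once $c_{\theta}\ge 2\ee^{3}/\theta^{2}$; combining this with $\theta d\ge\ln(\ee N/(\eps s))$ gives
\[
  \Bigl(\frac{\ee\,ds}{\theta m}\Bigr)^{\theta d}\;\le\;\delta^{\ln(\ee N/(\eps s))}\;=\;\Bigl(\frac{\eps s}{\ee N}\Bigr)^{\ln(1/\delta)}\;\le\;\Bigl(\frac{\eps s}{\ee N}\Bigr)^{2},
\]
so the $k=s$ base is at most $(\ee N/s)(\eps s/(\ee N))^{2}=\eps^{2}s/(\ee N)\le\eps^{2}\le\eps/2$, as required.

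The part I expect to be the real obstacle is exactly this final optimisation: one must check that a \emph{single} constant $c_{\theta}$, depending on $\theta$ alone, forces the small quantity $\delta$ — raised to the power $\theta d$, which is of order $\ln(\ee N/(\eps s))$ — to beat the combinatorial blow-up $(\ee N/k)^{k}$ simultaneously for every $1\le k\le s$, while honestly absorbing into $c_{\theta}$ (and, if necessary, into the constant inside the logarithm) the $O(1)$ factors coming from the binomial coefficient bound, the ceiling in the definition of $d$, and the sampling-without-replacement within each neighbourhood (this is also where I would re-verify $ds\le m$). The monotonicity in $k$, which collapses the union bound to the single term $k=s$, is what makes the estimate tractable; the per-subset collision bound and the binomial tail estimate themselves are routine.
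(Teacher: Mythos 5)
The paper does not prove this theorem itself but defers to \cite{foucart2013mathematical}, Section~13, and your argument is essentially that standard proof: a union bound over left subsets of size $k\le s$ combined with a collision-counting bound $\binom{dk}{\ell}(dk/m)^{\ell}$ and the choice of $d$ and $c_\theta$ to make the $k=s$ term dominate. The reasoning is correct, including the monotonicity-in-$k$ reduction and the final substitution, and the constant-factor issues you flag (the $m-d$ versus $m$ denominator, the ceiling in $d$) are indeed absorbed into $c_\theta$ exactly as you describe.
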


According to Theorem \ref{thm:expander}, any randomly chosen left $d$-regular bipartite graph is a lossless expander with high probability, provided that \eqref{eq:expanderchoice} is satisfied.
The following theorem states  that
the adjacency matrix $\Ao \in \set{0,1}^{m\times N}$,
\begin{equation} \label{eq:am}
	\Ao_{ij}
	= 1
	:\iff
	(j,i) \in E \,,
\end{equation}
of any lossless expander  with left vertices $L = \set{1, \dots, N}$ and right vertices $R = \set{1, \dots, m}$ yields stable recovery of
any sufficiently sparse parameter vector. The result was first established in~\cite{berinde2008combining}, we present the version found in~\cite{foucart2013mathematical}.

\begin{theorem}[Recovery guarantee for lossless expanders] \label{thm:stable}
Let $\Ao\in\{ 0, 1\}^{m\times N}$ be the adjacency matrix of a left $d$-regular bipartite graph having  $\theta_{2s}<1/6$.
Further, let $\bx\in\C^N$, $\eta >0$ and $\be\in\C^m$ satisfy $\Vert \be\Vert_1\leq\eta$, set $\bb:= \Ao\bx +\be$, and denote by $\bx_\star$ a solution of
\begin{equation} \label{eq:bp}
\begin{aligned}
&\minimize_{\bz \in \C^N} &&\norm{\bz}_1\\
&\suchthat &&\norm{\Ao \bz-\bb}_1
\leq \eta  \,.
\end{aligned}
\end{equation}
Then
\[
\Vert \bx - \bx_\star\Vert_1\leq \frac{2(1-2\theta_{2s})}{(1-6\theta_{2s})}\sigma_s(\bx)_1 + \frac{4}{(1-6\theta_{2s})d}\eta \,.\]
Here  the quantity $\sigma_s(\bx)_1:=\inf\{\Vert \bx - \bz\Vert_1\colon \bz\text{ is }s\text{-sparse}\}$ measures by how much the vector  $\bx \in \C^N$ fails to be $s$-sparse.
\end{theorem}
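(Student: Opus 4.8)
The proof follows the by-now-standard two-stage template for $\ell_1$-recovery guarantees: one first extracts from the combinatorial expansion property an \emph{$\ell_1$ robust null space property} of order $s$ for the adjacency matrix $\Ao$, and then invokes the general fact that such a property forces stable and robust reconstruction by the program \eqref{eq:bp}. As a first step I would record the $\ell_1$ near-isometry for sparse vectors: for every $\mathbf{v}\in\C^N$ supported on a set of size at most $t$,
\[
 (1-2\theta_t)\,d\,\norm{\mathbf{v}}_1 \;\le\; \norm{\Ao\mathbf{v}}_1 \;\le\; d\,\norm{\mathbf{v}}_1 .
\]
The right inequality is immediate, since each column of $\Ao$ has exactly $d$ nonzero entries, all equal to one. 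For the left inequality, order the support $j_1,\dots,j_t$ of $\mathbf{v}$ by non-increasing magnitude, process the corresponding left vertices in this order, and declare each right vertex to be \emph{claimed} by the first processed left vertex adjacent to it. Writing $C_r$ for the set of edges joining $j_r$ to an already-claimed right vertex, one has exactly $\sum_{r\le l}\abs{C_r}=dl-\abs{R(\{j_1,\dots,j_l\})}$, so \eqref{eq:expander} gives $\sum_{r\le l}\abs{C_r}\le\theta_t\,d\,l$ for all $l\le t$. Applying the triangle inequality on each claimed coordinate of $\Ao\mathbf{v}$ and summing yields $\norm{\Ao\mathbf{v}}_1\ge\sum_r(d-2\abs{C_r})\abs{v_{j_r}}$, and an Abel summation — rewriting $\sum_r\abs{C_r}\abs{v_{j_r}}=\sum_l(\abs{v_{j_l}}-\abs{v_{j_{l+1}}})\sum_{r\le l}\abs{C_r}$ and using $\sum_l(\abs{v_{j_l}}-\abs{v_{j_{l+1}}})\,l=\norm{\mathbf{v}}_1$ — bounds the correction by $2\theta_t d\norm{\mathbf{v}}_1$, giving the claim.

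The main step is to promote this to the robust null space property: for every $\mathbf{v}\in\C^N$ and every $S$ with $\abs{S}\le s$,
\[
 \norm{\mathbf{v}_S}_1 \;\le\; \frac{2\theta_{2s}}{1-4\theta_{2s}}\,\norm{\mathbf{v}_{\overline S}}_1 \;+\; \frac{1}{(1-4\theta_{2s})\,d}\,\norm{\Ao\mathbf{v}}_1 ,
\]
where the hypothesis $\theta_{2s}<1/6$ is exactly what makes $\rho:=\tfrac{2\theta_{2s}}{1-4\theta_{2s}}<1$. I would first reduce to the worst case in which $S$ is the index set of the $s$ largest entries of $\mathbf{v}$, which is legitimate because that choice simultaneously maximizes $\norm{\mathbf{v}_S}_1$ and minimizes $\norm{\mathbf{v}_{\overline S}}_1$ among index sets of size $s$. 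Then I would rerun the claiming argument above with the enlarged set $U=S\cup T$, where $T$ collects the $s$ next-largest entries of $\mathbf{v}$, so that $\abs{U}\le 2s$ and $\theta_{2s}$ is available — but now keeping the \emph{full} vector $\mathbf{v}$ inside $\Ao$. The coordinates of $\Ao\mathbf{v}$ lying in $R(U)$ then reproduce $\norm{\mathbf{v}_U}_1$ up to (i) the intra-$U$ collision loss, bounded by $2\theta_{2s}d\norm{\mathbf{v}_U}_1$ exactly as before, and (ii) an interference term produced by the entries of $\mathbf{v}$ outside $U$. This interference between the support and its complement is the crux; it is controlled by combining the set-intersection bounds that expansion at scale $2s$ imposes on how much the neighborhoods of outside vertices can overlap $R(U)$ with a telescoping/rearrangement argument over the magnitude-sorted off-support coefficients. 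Collecting the contributions and using $\norm{\mathbf{v}_U}_1\ge\norm{\mathbf{v}_S}_1$ should give $\norm{\Ao\mathbf{v}}_1\ge(1-2\theta_{2s})d\norm{\mathbf{v}_S}_1-2\theta_{2s}d\norm{\mathbf{v}}_1$, and since $\norm{\mathbf{v}}_1=\norm{\mathbf{v}_S}_1+\norm{\mathbf{v}_{\overline S}}_1$ this rearranges to the displayed null space inequality.

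With the robust $\ell_1$ null space property in hand, with constants $\rho=\tfrac{2\theta_{2s}}{1-4\theta_{2s}}$ and $\tau=\tfrac{1}{(1-4\theta_{2s})d}$, the theorem is the general consequence that such a property implies stable recovery. Applying the null space inequality to $\mathbf{v}=\bx-\bx_\star$ with $S$ chosen so that $\norm{\bx_{\overline S}}_1=\sigma_s(\bx)_1$, using that $\bx$ is feasible for \eqref{eq:bp} (since $\norm{\Ao\bx-\bb}_1=\norm{\be}_1\le\eta$) so that $\norm{\bx_\star}_1\le\norm{\bx}_1$, and using $\norm{\Ao(\bx-\bx_\star)}_1\le\norm{\Ao\bx_\star-\bb}_1+\norm{\bb-\Ao\bx}_1\le 2\eta$, one obtains
\[
 \norm{\bx-\bx_\star}_1 \;\le\; \frac{2(1+\rho)}{1-\rho}\,\sigma_s(\bx)_1 \;+\; \frac{4\tau}{1-\rho}\,\eta ,
\]
and substituting the values of $\rho$ and $\tau$ turns the two coefficients into $\tfrac{2(1-2\theta_{2s})}{1-6\theta_{2s}}$ and $\tfrac{4}{(1-6\theta_{2s})d}$, which is exactly the assertion.

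The routine parts are the isometry estimate and the passage from null space property to recovery; the genuine difficulty is the interference estimate in the middle step. One must control that term so that it carries a factor $\theta_{2s}$ and not a spurious factor of $d$ (which the trivial column bound would cost) or of $s$ (which a naive per-vertex intersection estimate costs). This is the combinatorial heart that distinguishes expander recovery from the generic restricted-isometry argument, where the corresponding role is played by the near-orthogonality $\abs{\langle\Ao\mathbf{u},\Ao\mathbf{w}\rangle}$ of images of disjointly supported vectors; here it has to be replaced by a careful coupling of the expansion bounds at scale $2s$ with a rearrangement over the sorted off-support coefficients.
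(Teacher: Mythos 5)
The paper does not actually prove this theorem: it is quoted from \cite[Section~13]{foucart2013mathematical} (going back to \cite{berinde2008combining}), and the paper's ``proof'' is that citation. Measured against that source, your outline is the same argument: the $\ell^1$ near-isometry $(1-2\theta_t)d\norm{\mathbf v}_1\le\norm{\Ao\mathbf v}_1\le d\norm{\mathbf v}_1$ for $t$-sparse $\mathbf v$ via the claiming/Abel-summation device (your identity $\sum_{r\le l}\abs{C_r}=dl-\abs{R(\{j_1,\dots,j_l\})}$ and the resulting bound are correct), promotion to the $\ell^1$ robust null space property with $\rho=2\theta_{2s}/(1-4\theta_{2s})$ and $\tau=1/((1-4\theta_{2s})d)$, and the generic null-space-property-to-recovery implication; your closing arithmetic turning $(\rho,\tau)$ into the stated constants checks out.

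One caveat on the middle step as you sketch it. If you enlarge to $U=S\cup T$ with $\abs{U}=2s$ and work on $R(U)$, then bounding the interference --- the edges from a size-$s$ block $T_k\subset\overline U$ into $R(U)$ --- by the expansion of $U\cup T_k$ requires $\theta_{3s}$, which is not among the hypotheses. The standard bookkeeping keeps $S$ of size $s$: restrict to $R(S)$, use $\norm{\Ao\mathbf v_S}_1\ge(1-2\theta_s)d\norm{\mathbf v_S}_1$, and control $\sum_{j\notin S}e_j\abs{v_j}$ (with $e_j$ the number of edges from $j$ into $R(S)$) by partitioning $\overline S$ into magnitude-sorted blocks $T_1,T_2,\dots$ of size $s$ and applying expansion to $S\cup T_k$, which has size at most $2s$; this gives $\sum_{j\in T_k}e_j\le\theta_{2s}d(s+\abs{T_k})\le 2\theta_{2s}ds$ and hence $\sum_{j\notin S}e_j\abs{v_j}\le 2\theta_{2s}d\norm{\mathbf v}_1$, which is exactly your target inequality $\norm{\Ao\mathbf v}_1\ge(1-2\theta_{2s})d\norm{\mathbf v_S}_1-2\theta_{2s}d\norm{\mathbf v}_1$ using $\theta_{2s}$ only. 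With that repair your plan goes through and coincides with the cited proof.
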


Combining Theorems \ref{thm:expander} and \ref{thm:stable}, we can conclude that the adjacency matrix $\Ao$ of a randomly chosen
left $d$-regular bipartite graph will, with high probability,  recover  any  sufficiently sparse vector $\bx \in \C^N$ by basis pursuit reconstruction \eqref{eq:bp}.

\section{Mathematical framework of the proposed   PAT compressed sensing scheme}
\label{sec:cspat}

In this section we describe our proposed compressed sensing strategy. As mentioned in the introduction, we
focus on PAT with integrating line detectors,  which is  governed by the two dimensional wave equation \eqref{eq:wave}.
In the following we first describe the compressed sensing measurement setup in Subsection~\ref{sec:setup}
and describe the sparse recovery strategy in Subsection~\ref{sec:recon}.
As the used pressure data are not sparse in the original domain we  introduce a temporal transform that  makes the data sparse in the spatial domain.
In Subsection~\ref{sec:sparsifying} we present an example of such a sparsifying
temporal transform.
In Subsection~\ref{sec:summary}  we finally summarize the whole PAT compressed sensing scheme.

\begin{figure}[tbh]
\centering \includegraphics[width=0.4\textwidth]{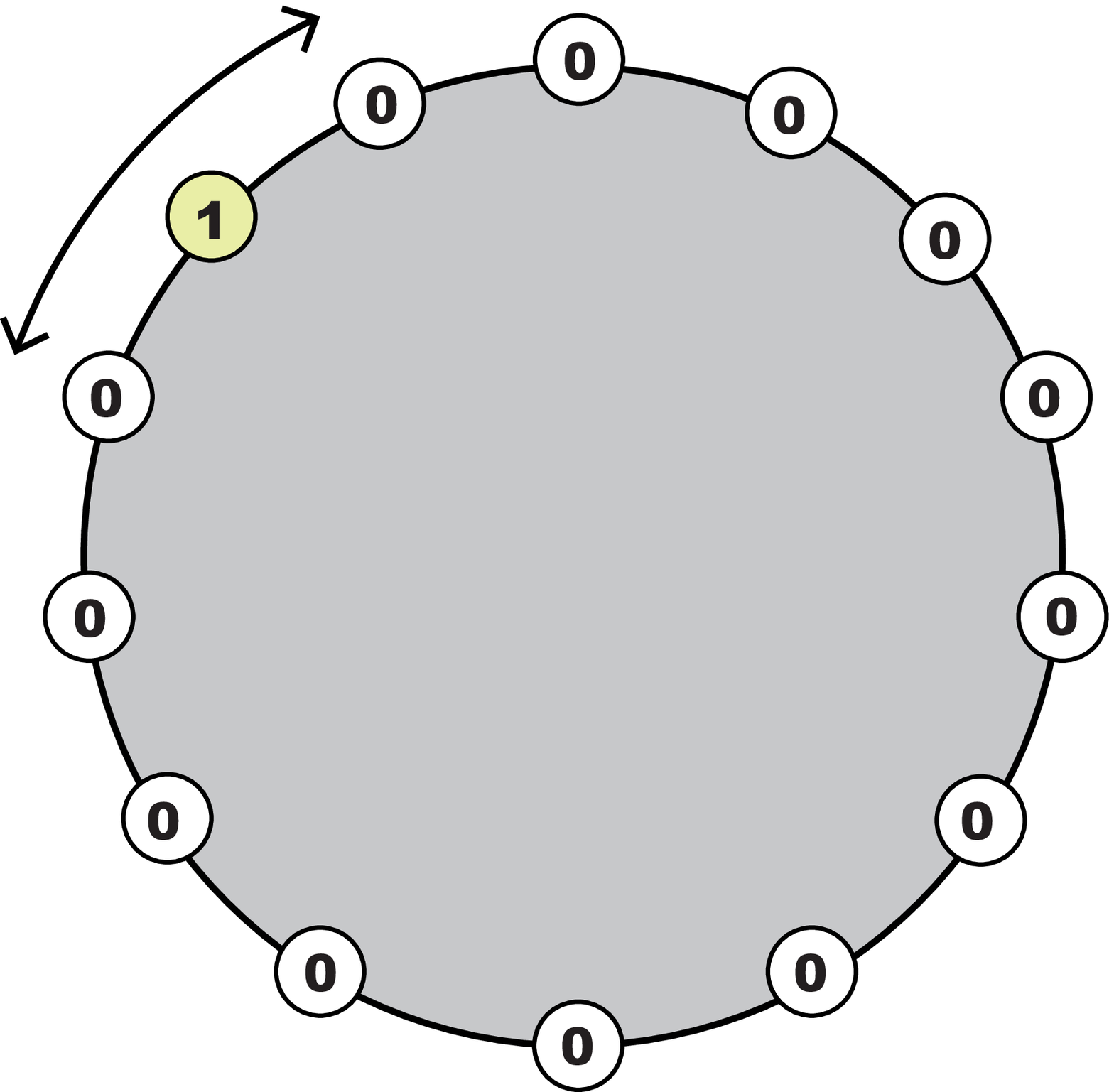}\hspace{0.08\textwidth}
\includegraphics[width=0.4\textwidth]{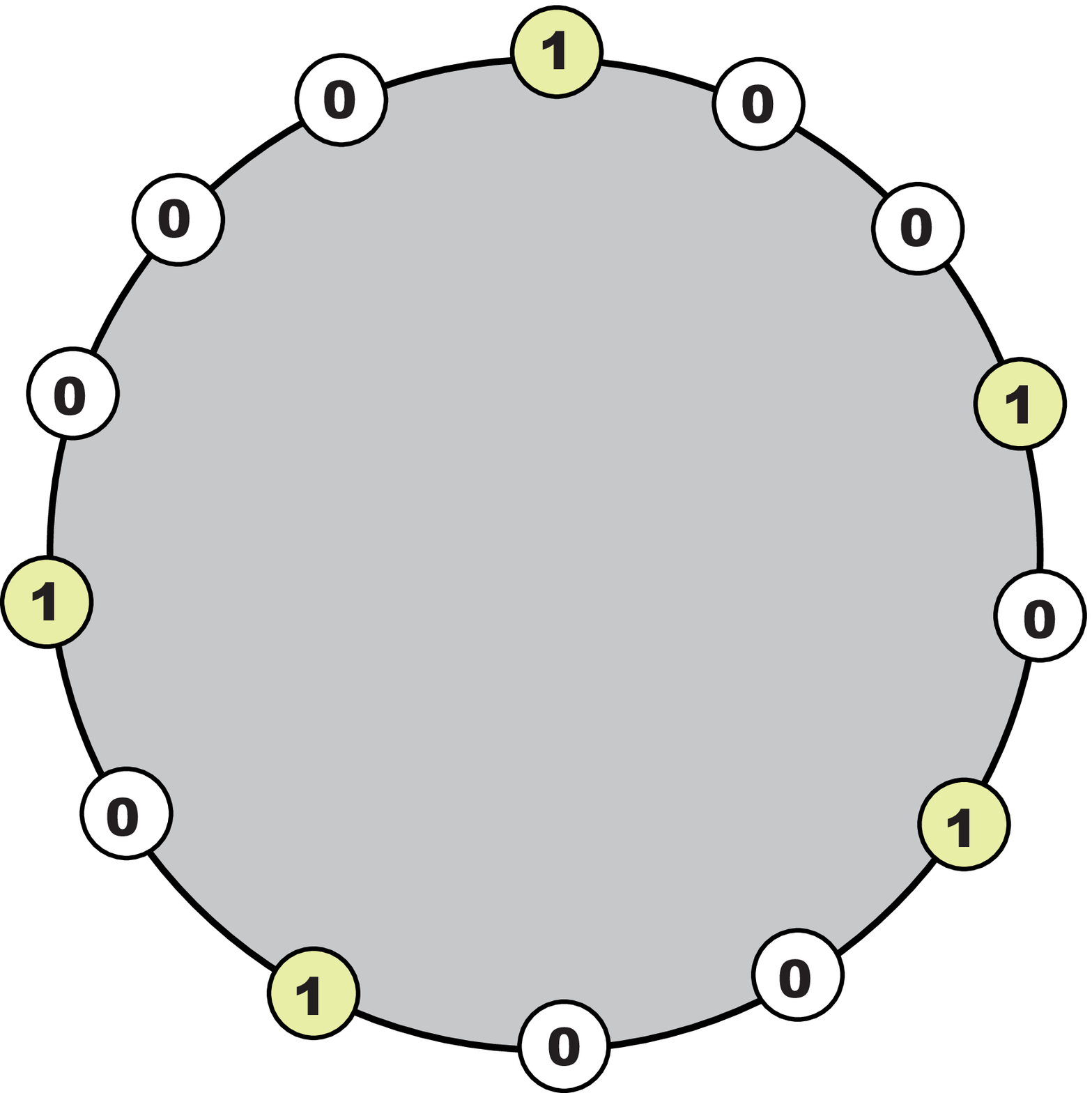}
\caption{\label{fig:SingleDetPAT}\textsc{Left:} Classical PAT sampling, where a single detector is moved around the object to collect individual  pressure
signal $p_j$. \textsc{Right:}  Compressed sensing approach   where each measurement  consists of a random  zero/one combination of individual pressure values.}
\end{figure}

\subsection{Compressed sensing PAT}
\label{sec:setup}

We define the unknown full sample pressure $p_j   \colon [0,2\rho]  \to \R$,  for $j \in \{1, \dots, N\}$  by
\eqref{eq:data}, \eqref{eq:sampling}, where $p$ is the solution of the two dimensional wave equation \eqref{eq:wave}.
We suppose that the (two dimensional) initial pressure distribution $f$ is smooth and compactly supported in $B_R(0)$ which implies that any $p_j$ is smooth and vanishes in a neighborhood of zero.
Furthermore we assume that the data \eqref{eq:data}  are sampled  finely enough to allow for $f$ to be reconstructed from $p_j$
by standard PAT reconstruction algorithms such as time
reversal \cite{BurMatHalPal07,HriKucNgu08,Treeby10} or filtered backprojection \cite{BurBauGruHalPal07,finch2007inversion,FinPatRak04,Hal14,Kun07a,XuWan05}.

Instead of measuring each pressure signal $p_j$ separately, we take $m$ compressed sensing measurements.  For each measurement, we select sensor locations $z_j$ with $j \in J_i$
at random and take the sum of the corresponding  pressure values $p_j$.
Thus the $i$-th measurement is given by
\begin{equation} \label{eq:cs2}
	y_i (t)
	:=
	\sum_{j\in J_i} 	
	p_j(t) \quad \text{ for } i  \in \set{  1, \dots, m}  \,,
\end{equation}
where $J_i \subset \set{1, \dots, N}$ corresponds to the set of all detector locations selected for the $i$-th measurement, and $m \ll N$ is the  number of compressed sensing measurements.

In practice, the compressed sensing measurements could be realized by summation of several detector channels, using a configurable matrix switch and a summing amplifier. Even more simply, a summing amplifier summing over all $N$ channels could be used, while individual detector channels $z_j$ are turned
{\tt on} or {\tt off}, using solid state relays. Thereby, only one ADC is required for one compressed sensing measurement. Performing $m$ compressed sensing measurements in parallel is facilitated by using $m$ ADCs in parallel, and the according number of matrix switches, relays, and summing amplifiers.

In the following we write
\begin{align}
\bp \colon \kl{0, \infty} &\to \R^N
\colon t \mapsto \bp(t) =
\kl{p_1(t), \dots, p_N(t)}^{\mathsf{T}} \,,
\\
\by \colon \kl{0, \infty} &\to \R^m
\colon t \mapsto \by(t) =
\kl{y_1(t), \dots, y_m(t)}^{\mathsf{T}} \,,
\end{align}
for the  vector of unknown complete pressure signals and the vector of the corresponding  measurement data, respectively. Further, we denote by
\begin{equation*}
	\Ao \in \set{0,1}^{m\times N}
	\text{ with entries }
	\Ao_{ij}:=
	\begin{cases}
	1,
	&\text{ for }  j \in J_i
	\\ 0,
	&\text{ for } j \not\in J_i \,,
	\end{cases}
\end{equation*}
the matrix whose entries in the $i$-th row correspond the sensor locations selected  for the  $i$-th measurement. In order to apply the exact recovery guarantees from
Subsection~\ref{sec:lossless}, we require that each row of $\Ao$ contains exactly $d$ ones, where $d \in \N$ is some fixed  number.  Practically, this  means that each detector location contributes to exactly $d$ of the $m$ measurements, which also guarantees that  the measurements are well calibrated and there is no bias towards some of the detector locations.

Recovering the complete pressure data \eqref{eq:data} from the compressed sensing measurements \eqref{eq:cs1} can be written as an uncoupled system of under-determined linear equations,
\begin{equation}\label{eq:system1}
	 \Ao \bp(t)   = \by(t)
	 \quad
	  \text{ for any } t \in [0,2\rho] \,,
\end{equation}
where $\Ao \bp(t) := \Ao \kl{\bp(t)}$ for any $t$.
From  \eqref{eq:system1}, we would like to recover  the complete set of pressure values $\bp(t)$ for all times $t \in [0,2\rho]$.
Compressed sensing results predict that under certain assumptions on the matrix $\Ao$ any $s$-sparse vector $\bp(t)$ can be recovered from $\Ao \bp(t)$ by means of sparse recovery algorithms like $\ell^1$-minimization.

Similar to many other applications (cf.~Section \ref{sec:background} above), however, we cannot expect sparsity in the original domain. Instead, one has $\bp(t) = \Psio \bx(t)$, where
$\Psio$ is an appropriate orthogonal transform  and  $\bx(t) \in \R^N$ is  a sparse coefficient vector. This yields a sparse recovery problem for $\bx(t)$ involving the matrix $\Ao \Psio$, which does not inherit the the recovery guarantees of  Subsection \ref{sec:lossless}. Hence we have to find a means to establish sparsity without considering different bases. Our approach will consist of applying a  transformation in the time domain that sparsifies the pressure in the spatial domain.
A further advantage of working in the original domain
is that the structure of $\Ao$ allows the use of specific efficient algorithms  like sparse matching pursuit~\cite{berinde2008practical} or certain sublinear-time algorithms
like \cite{jafarpour2009efficient,xu2007efficient}.

\subsection{Reconstruction strategy}
\label{sec:recon}

We denote by  $\mathcal{G}([0,2\rho])$ the
set of all infinitely differentiable functions
$g \colon  [0,2\rho] \to \R$ that vanish in a neighborhood of zero. To obtain the required sparsity, we will work with
a sparsifying transformation
\begin{equation} \label{eq:trafo}
	\To \colon  \mathcal{G}([0,2\rho])
	\to   \mathcal{G}([0,2\rho]) \,,
\end{equation}
that is,  $\To \bp (t) \in \R^N$ can be sufficiently well approximated by a sparse vector for any $t\in  [0,2\rho]$ and certain classes
of practically relevant data $\bp (t)$.
Here we use the convention that $\To$ applied to a vector valued function $\bg = (g_1, \dots,  g_k)$ is understood to be applied in each component separately,
that is,
\begin{equation} \label{eq:time-trafo}
	\To \bg (t) :=
	\kl{ (\To g_1)(t), \dots, (\To g_k)(t) } \,,
	\quad \text{ for } t \in [0, 2\rho] \,.
\end{equation}
We further require that $\To$ is an injective mapping, such that any $g \in \mathcal{G}([0,2\rho])$
can be uniquely recovered from the transformed data $\To g$.  See Subsection~\ref{sec:sparsifying} for the design of
such a sparsifying transformation.

Since any temporal transformation interchanges with $\Ao$, application of   the sparsifying temporal transformation $\To$
to the original system \eqref{eq:system1} yields
\begin{equation}\label{eq:system2}
	 \Ao \kl{ \To \bp(t) }  = \To \by(t)
	 \quad
	  \text{ for any } t \in [0,2\rho] \,.
\end{equation}
According to the choice of the temporal transform, the transformed pressure $\To \bp(t)$ can be well approximated by a vector that is sparse in the spatial  component.
We therefore solve, for any $t \in [0, T]$,  the following $\ell^1$-minimization problem
\begin{equation}\label{eq:opt}
\begin{aligned}
&\minimize_{\bq \in \R^N} &&\norm{\bq}_1\\
&\suchthat  &&\norm{\Ao\bq - \To\by(t)}_1
\leq \eta  \,,
\end{aligned}
\end{equation}
for some error threshold $\eta >0$.
As follows from Theorem~\ref{thm:stable}, the solution $\bq_\star(t)$ of  the $\ell^1$-minimization problem \eqref{eq:opt} provides an approximation to $\To \bp(t)$ and consequently we have $\To^{-1}  \bq_\star(t)  \simeq \bp(t)$ for all $t \in [0, 2\rho]$.
Note that  the use of $\ell^1$-norm
$\norm{\edot}_1$ for the constraint in \eqref{eq:opt} is required for the stable recovery guarantees for our particular choice of the  matrix $\Ao$ containing zero/one entries using data that is noisy and only approximately sparse.

As a further benefit,  compressed sensing measurements may show an increased signal to noise ratio. For that purpose consider Gaussian noise in the pressure data,  where instead of  the exact pressure data $\bp(t)$, we have noisy data   $\tilde\bp(t)  =  \bp(t) + \berr$.  For the sake of simplicity assume that the entries $\eta_j \sim\Ncal(0,\sigma^2)$ of   $\berr$  are independent and identically distributed.
The corresponding noisy (and rescaled) measurements
are then  given by
\[
\frac1{|J_i|}\sum_{j\in J_i}\kl{p_j(t)
+\eta_j }
=
\frac1{|J_i|}\sum_{j\in J_i}
p_j(t)
+
\frac1{|J_j|}\sum_{j\in J_i}\eta_j  \,.
\]
The variance in the compressed sensing measurements is therefore $\sigma^2/|J_i|$ compared to $\sigma^2$ in the  individual data $\tilde p_j(t)$. Assuming some coherent averaging
in the signal part this yields  an increased  signal
to noise ratio reflecting the inherent averaging of
compressed sensing measurements.

\subsection{Example of a sparsifying temporal transform}\label{sec:sparsifying}

As we have seen above, in order to obtain recovery guarantees for our  proposed compressed scheme,
we require a temporal transformation that sparsifies the pressure signals
in the angular component.
In this section, we construct an example  of such a sparsifying transform.

Since the solution of the two dimensional wave equation
can be reduced to the spherical means,  we will construct such a sparsifying
transform for the  spherical means
\[
 	\Mo f(z,r)
	:= \frac1{2\pi} \int_{\bbS^1}
	f(z + r \omega) \dx \sigma(\omega)
	\quad \text{ for } ( z, r)
	\in \partial B_R(0) \times  (0, \infty) \,.
\]
In fact, the solution of the two dimensional wave equation \eqref{eq:wave}
can be expressed in terms of  the spherical means via
$p(z,t) =  \partial_t \int_0^t r \Mo f (z,r)/ \sqrt{t^2-r^2} \rmd r $, see \cite{Joh82}.
By using standard tools for solving Abel type equations, the last expression can be explicitly inverted, resulting in $
(\Mo f)(z,r) =  2/\pi \int_0^r p (z,t)/\sqrt{r^2-t^2} \rmd t$
(see \cite{BurBauGruHalPal07,GorVas91}).
Hence any sparsifying transformation for the spherical means $\Mo f$
also yields a sparsifying transformation for the solution of the wave equation
\eqref{eq:wave}, and vice versa.

We found empirically, that $\partial_r r \Ho_r \partial_r \Mo f $ is sparse in the spatial direction
for any function $f$ that is the superposition of few indicator functions
of regular domains.  Here $\partial_r g$ denotes the
derivative in the radial variable,
\[
    \Ho_r g (z,r) =
    \frac1\pi \int_\R \frac{g (z,s)}{r-s}\dx s
    \,, \quad \text{ for } ( z, r)
	\in \partial B_R(0) \times (0, \infty) \,,
\]
the Hilbert transform of the function $g \colon \partial B_R(0) \times \R \to \R$ in the second component, and $r$  the multiplication operator that maps the function $(z,r) \mapsto g(z,r)$ to  the function $(z,r) \mapsto r g(z,r)$.  Further, the spherical means
$\Mo f \colon \partial B_R(0) \times \R \to \R$ are extended to an odd function
in the  second  variable.
For a simple radially symmetric phantom the sparsity of $\partial_r r \Ho_r \partial_r \Mo f $ is illustrated in  Figure~\ref{fig:SimplePhantomSparsity}.
Thus we can choose $\To = \partial_r r \Ho_r \partial_r$  as a sparsifying temporal transform for the spherical means.

\begin{figure}[tbh!]
\centering
\includegraphics[height =0.3\textwidth,width =0.3\textwidth]{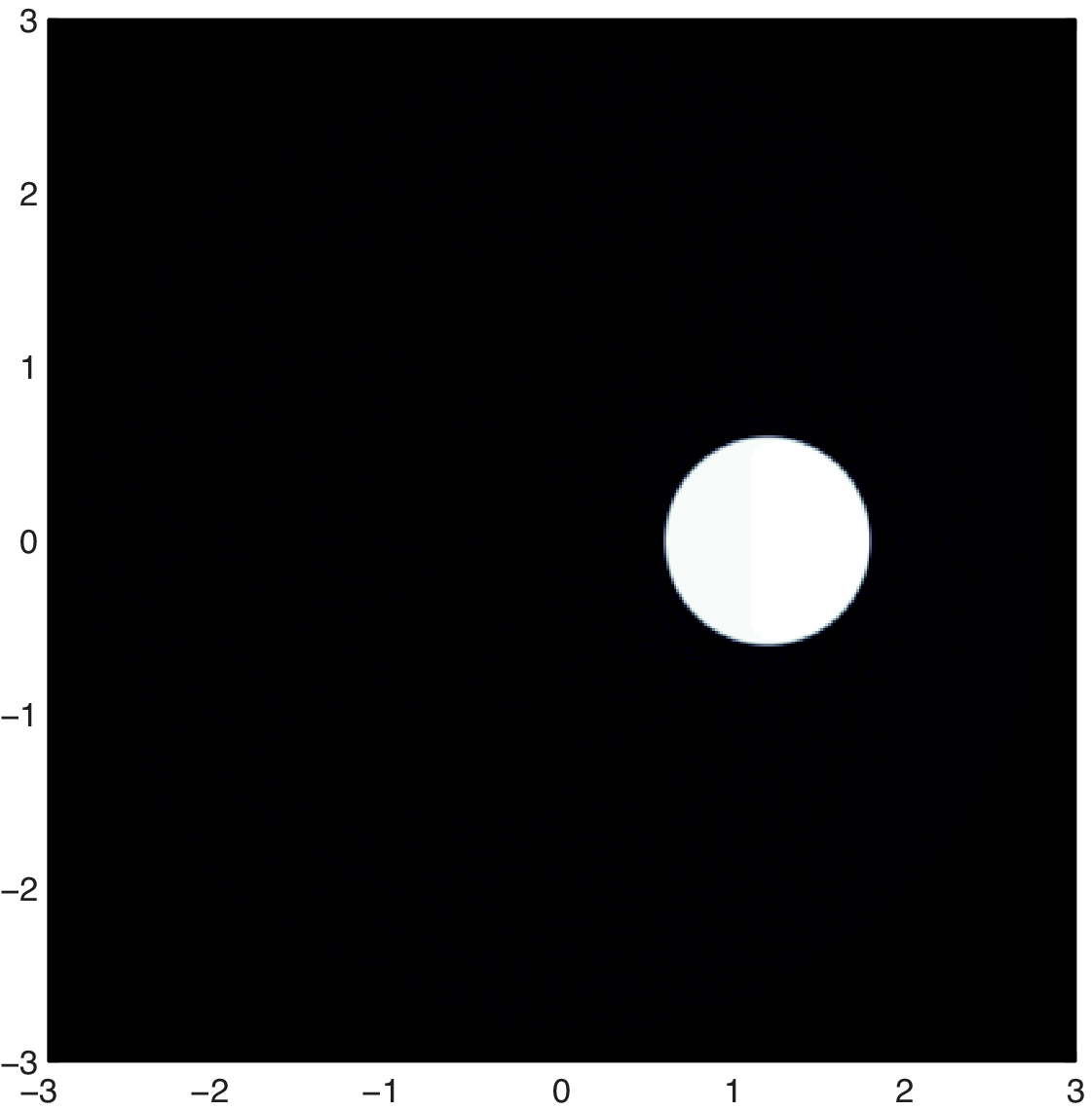}\quad
\includegraphics[height =0.3\textwidth,width =0.3\textwidth]{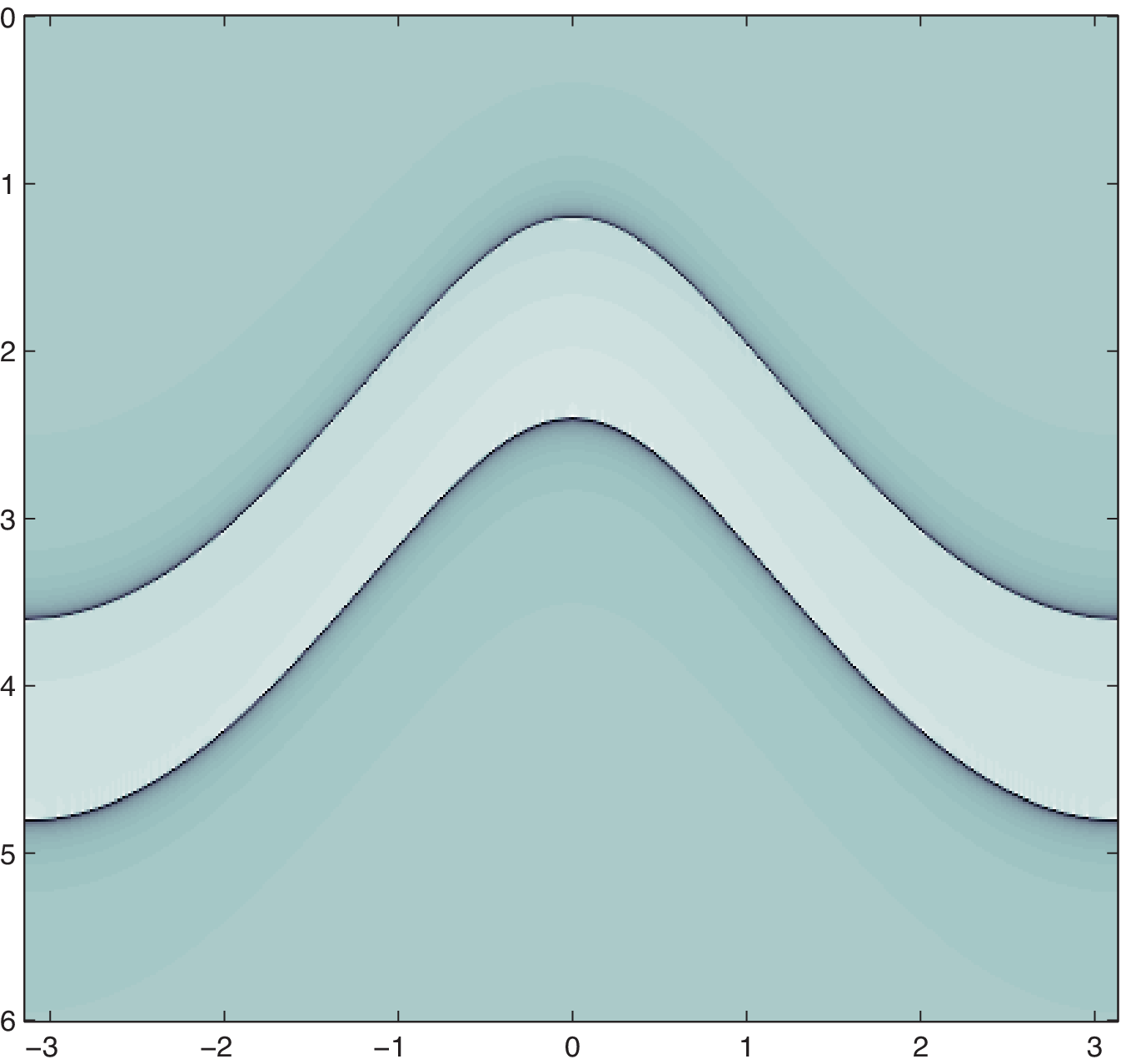}\quad
\includegraphics[height =0.3\textwidth,width =0.3\textwidth]{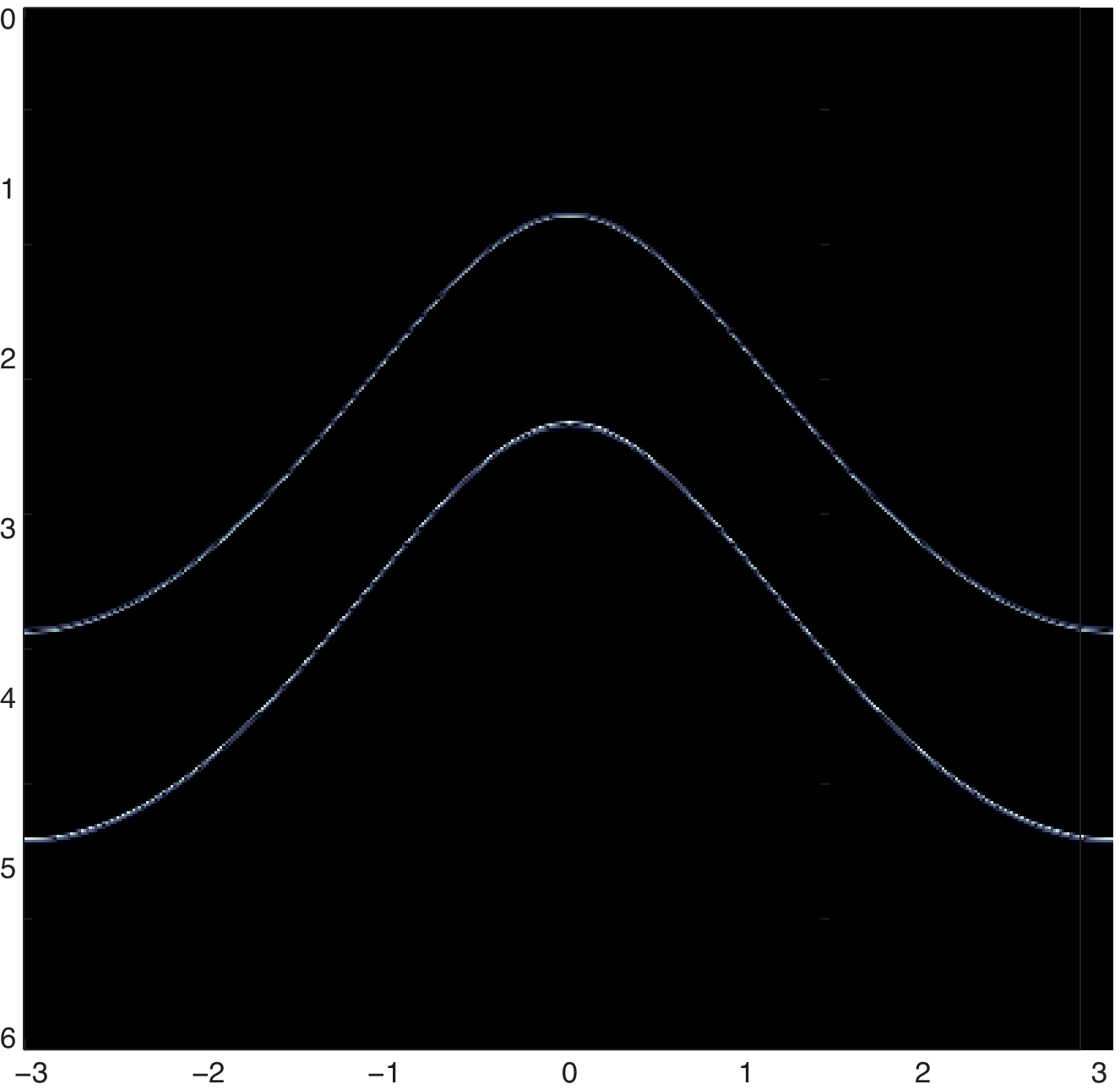}
\caption{\textsc{Sparsity induced by $\partial_r (r\Ho_r \partial_r)$.}
The left image shows the simple disc-like phantom $f$
(characteristic function of a disc), the middle image shows the
filtered spherical means  $r \Ho_r \partial_r \Mo f$ and the right
image shows the  sparse  data
$\partial_r (r\Ho_r \partial_r \Mo f)$.}
\label{fig:SimplePhantomSparsity}
\end{figure}

The function $r \Ho_r \partial_r \Mo f $ also appears in  the following formula
for recovering a function from its spherical means derived in
\cite{finch2007inversion}.

\begin{theorem}[Exact reconstruction formula for spherical means]\label{thm:fbp}
Suppose $f\in C^\infty(\R^2)$ is supported in the closure of  $B_R(0)$.
Then, for any $x\in B_R(0)$, we have
\begin{equation} \label{eq:fbp}
f(x) =
\frac1{2\pi R} \int_{\partial B_R(0)}
\kl{ r \Ho_r \partial_r \Mo  f}(z,|x-z|)
\,\dx s(z) \,.
\end{equation}
\end{theorem}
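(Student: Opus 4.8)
The plan is to reduce \eqref{eq:fbp} to a one-dimensional special-function identity by a plane-wave (Fourier) decomposition. Since $f\in C^\infty(\R^2)$ with compact support has Schwartz Fourier transform $\tilde f$, one may write $f(x) = (2\pi)^{-2}\int_{\R^2}\tilde f(\xi)\,\ee^{\ii\langle x,\xi\rangle}\,\dx\xi$. Both sides of \eqref{eq:fbp} are linear in $f$, and the operators appearing on the right-hand side --- the spherical mean transform $\Mo$, the radial derivative $\partial_r$, the radial Hilbert transform $\Ho_r$, multiplication by $r$, and the backprojection integral over $\partial B_R(0)$ --- are continuous and interchange with this synthesis integral (the singular operators read in the principal-value / tempered-distribution sense). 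Hence it suffices to verify \eqref{eq:fbp} for each plane wave $f_\xi(x):=\ee^{\ii\langle x,\xi\rangle}$ and then integrate the resulting identities against $(2\pi)^{-2}\tilde f(\xi)$ over $\xi\in\R^2$. (Strictly, $f_\xi$ is not compactly supported, so one works with the Schwartz-class combination $\tilde f(\xi)f_\xi$ under the integral sign, or truncates and estimates the tail --- this is routine.)

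For a plane wave, the classical formula $\tfrac1{2\pi}\int_{\bbS^1}\ee^{\ii r\langle\omega,\xi\rangle}\,\dx\sigma(\omega)=J_0(r|\xi|)$ gives $\Mo f_\xi(z,r)=\ee^{\ii\langle z,\xi\rangle}J_0(r\lambda)$ with $\lambda:=|\xi|$. Since $r\,\Ho_r\,\partial_r$ acts in the radial variable only, the factor $\ee^{\ii\langle z,\xi\rangle}$ pulls out and one is left to compute the filtered radial profile
\[
    K(r) := \kl{ r\,\Ho_r\,\partial_r } J_0(r\lambda) \,.
\]
Using $\partial_r J_0(r\lambda)=-\lambda J_1(r\lambda)$ together with the Hilbert-transform identities for Bessel functions --- equivalently, the representation $J_0(u)=\tfrac1\pi\int_0^\pi\cos(u\sin\phi)\,\dx\phi$ and $\Ho[\cos(\lambda\edot)](r)=\operatorname{sgn}(\lambda)\sin(\lambda r)$ --- this profile can be written in closed form as $r\lambda$ times an explicit Bessel/Struve combination. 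Substituting into the right-hand side of \eqref{eq:fbp} with $f=f_\xi$ then reduces the theorem to the identity
\[
    \frac1{2\pi R}\int_{\partial B_R(0)} \ee^{\ii\langle z,\xi\rangle}\,K(|x-z|)\,\dx s(z) = \ee^{\ii\langle x,\xi\rangle}
    \qquad\text{for all } x\in B_R(0)\,.
\]

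The main obstacle is this last identity, which expresses that the backprojection integral over $\partial B_R(0)$ reconstitutes the plane wave. I would attack it via an addition theorem for Bessel functions (of Graf--Gegenbauer type): expand $K(|x-z|)$, and the phase $\ee^{\ii\langle z,\xi\rangle}$ by the Jacobi--Anger expansion, into circular harmonics in the polar angle of $z$ on $\partial B_R(0)$; the $\dx s(z)$-integral then kills all but one Fourier mode by orthogonality, and the surviving series in $|x|$ must be resummed to $\ee^{\ii\langle x,\xi\rangle}$. Equivalently one may read it as a stationary-phase statement: the dominant contributions come from the two points of $\partial B_R(0)$ on the line through $x$ in direction $\xi$, and their phases combine to $\langle x,\xi\rangle$. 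Beyond this, the remaining work is technical but unavoidable: justifying the interchange of the Fourier integral with the singular radial operators, making sense of $\Ho_r$ on the slowly decaying (suitably extended) function $J_0(\lambda r)$ as a tempered distribution, and checking that all integrals converge and that the passage $x\to\partial B_R(0)$ is harmless (for genuine spherical means of an $f$ supported strictly inside, $\Mo f(z,\edot)$ vanishes near the origin, which keeps the extensions and the operator $\To$ well defined, cf.\ the space $\mathcal{G}([0,2\rho])$). An alternative route avoiding Bessel identities is to use the Abel-type relations recalled above between $\Mo f$ and the wave field $p(z,\edot)$ to rewrite $r\,\Ho_r\,\partial_r\,\Mo f$ in terms of $p$ and then invoke a known universal-backprojection / time-reversal inversion formula for \eqref{eq:wave}; there the obstacle shifts to correctly composing the singular Abel and Hilbert operators and matching constants.
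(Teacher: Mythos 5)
First, note that the paper offers no proof of this statement: Theorem~\ref{thm:fbp} is imported verbatim and its ``proof'' is the citation to Corollary~1.2 of \cite{finch2007inversion}. So the comparison is really between your sketch and that reference, and your plane-wave reduction is in fact close in spirit to how the reference argues (everything is pushed onto the Fourier--Bessel side). The routine parts of your outline --- linearity, commuting the synthesis integral with $\Mo$, $\partial_r$, $\Ho_r$ and the backprojection, and $\Mo f_\xi(z,r)=\ee^{\ii\langle z,\xi\rangle}J_0(\lambda r)$ --- are fine, modulo one point you should not wave away: the odd extension in $r$. For a plane wave, $J_0(\lambda r)$ does not vanish at $r=0$, so its odd extension $\operatorname{sgn}(r)J_0(\lambda r)$ has a jump of size $2$ at the origin and $\partial_r$ produces a term $2\delta(r)$; after applying $r\Ho_r$ this contributes the constant $2/\pi$ to $K(r)$, which does not integrate to zero under backprojection. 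These contributions only cancel after reassembling $f$ (they sum to $2f(z)\delta(r)$ with $z\in\partial B_R(0)$, where $f$ essentially vanishes), so the termwise reduction to plane waves is not as innocent as ``routine''.

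The genuine gap, however, is that the proposal stops exactly where the theorem lives. The two steps you defer --- the closed form of $K(r)=\kl{r\Ho_r\partial_r}J_0(\lambda r)$, which brings in the Neumann functions $Y_0$, $Y_1$ through the Hilbert transform of Bessel functions, and above all the exact backprojection identity $\frac1{2\pi R}\int_{\partial B_R(0)}\ee^{\ii\langle z,\xi\rangle}K(|x-z|)\,\dx s(z)=\ee^{\ii\langle x,\xi\rangle}$ --- are the entire content of the result. The latter is not a routine orthogonality-plus-resummation exercise: after the Jacobi--Anger/Graf expansions one is left with an infinite series of products of Bessel functions of $\lambda|x|$ and $\lambda R$ that must collapse identically, and in \cite{finch2007inversion} this collapse comes from a nontrivial cancellation lemma (a Green's-identity argument for the Helmholtz solutions $J_0(\lambda|\cdot-x|)$ and $Y_0(\lambda|\cdot-y|)$), which is precisely where the hypothesis $x\in B_R(0)$ enters --- the formula fails for $x$ outside, so no argument that is insensitive to this can succeed. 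Your stationary-phase reading can at best confirm the identity asymptotically as $\lambda\to\infty$, whereas the claim is exact for every $\xi$. Until that identity is actually established, the proposal is a plan of attack rather than a proof.
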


\begin{proof}
See \cite[Corollary 1.2]{finch2007inversion}.
\end{proof}

In the practical implementation the spherical means are given only for a discrete number of
centers $z_j \in \partial B_R(0)$  yielding semi-discrete data similar to \eqref{eq:data},
\eqref{eq:sampling}. Formula  \eqref{eq:fbp}  can easily be adapted to discrete or semi-discrete data yielding a filtered backprojection type reconstruction algorithm; compare \cite[Section~4]{finch2007inversion}.
So if we can find the filtered spherical means
\begin{equation}\label{eq:sm-disc}
\kl{ r \Ho_r \partial_r \Mo  f}(z_j,\edot) \quad \text{ for all detector locations $z_j \in \partial B_R(0)$}
\end{equation}
we can obtain the desired reconstruction of $f$ by applying the backprojection operator
(the outer integration in \eqref{eq:fbp}) to $r \Ho_r \partial_r \Mo  f$.

\subsection{Summary of the reconstruction procedure}
\label{sec:summary}

In this section, we combine and summarize
the  compressed sensing scheme for photoacoustic tomography as described in the previous sections.
Our proposed compressed sensing  and sparse recovery  strategy takes the following form.

\begin{enumerate}[topsep=0.5em,itemsep=0.5em,
label = (\textsc{cs}\arabic*)]
\item\label{cs1}
Create a matrix $\Ao \in\{0,1\}^{m\times N}$ as the adjacency matrix of a randomly selected
left $d$-regular bipartite graph.
That is, $\Ao$ is a random matrix consisting of zeros and ones only,
with exactly $d$ ones in each column.

\item\label{cs2}
Perform $m$ measurements,  whereby in the $i$-th measurement pressure signals corresponding  to the nonzero entries  in $i$-th row of $\Ao$ are summed up, see
Equations~\eqref{eq:cs1} and~\eqref{eq:system1}.
This results in measurement data $\Ao \bp(t) = \by(t)\in\R^m$ for any $t\in [0,2\rho]$.

\item\label{cs3}
Choose a  transform $\To$ acting in the temporal direction,
which sparsifies the pressure data $\bp$ along  the spatial direction; compare Equation~\eqref{eq:time-trafo}.

\item\label{cs4}
For any $t \in [0,2\rho]$ and some given threshold $\eta$, perform $\ell^1$-minimization
\eqref{eq:opt} resulting in a sparse vector $ \bq_\star(t)$ satisfying $\Vert \Ao \bq_\star (t) -  \To \by(t)\Vert_1\leq\eta$.

\item\label{cs5}
Use $\bp_\star(t) = \To^{-1}\bq_\star(t)$ as the input for  a standard PAT inversion algorithm for  complete data,
such as time reversal or filtered backprojection.
\end{enumerate}

As we have seen in Subsection~\ref{sec:lossless} the procedure
\ref{cs1}--\ref{cs5} yields a close approximation to the original function $f$ if  the transformed data $\To \bp(t)$ are
sufficiently sparse in the spatial direction.
The required sparsity level is hereby given by the expander-properties of the matrix $\Ao$. Note that for exact data and exactly sparse data, we can use the error threshold $\eta = 0$.
In the more realistic scenario of noisy data and $\To \bp(t)$ being
only approximately sparse, we solve the optimization problem~\eqref{eq:opt} to yield a near optimal solution with error level bounded  by the noise level.

\section{Numerical results}\label{sec:numerical}
\label{sec:num}
To support the theoretical examinations in the previous sections,
in this section we present some simulations using
the proposed compressed sensing method. We first present reconstruction
results  using simulated data and then show reconstruction results using
experimental data.

\subsection{Results for simulated data}
\label{sec:sim}

As in Subsection~\ref{sec:sparsifying}, we work with the equivalent notion of the spherical means instead of  directly working with the solution of the wave equation \eqref{eq:wave}.
In this case the compressed sensing measurements provide data
\begin{equation} \label{eq:cs-m}
	y_i(r)  =
	\sum_{j \in J_i} m_j \kl{r}
	\quad \text{ for } i \in \set{ 1, \dots,  m} \text{ and } t \in  [0,2\rho] \,,
\end{equation}
where  $m_j   = (\Mo f) \kl{z_j,  \edot}$ denote the spherical
means collected at the $j$-the detector location $z_j$. We further denote by
$\boldm(t) =  \kl{m_1(t), \dots, m_N(t)}^{\mathsf{T}}$ the vectors of  unknown complete
spherical means and by $ \by(t) = \kl{y_1(t), \dots, y_m(t)}^{\mathsf{T}}$ the
vector of compressed sensing measurement data. Finally, we  denote by $\Ao \in \set{0,1}^{m\times N}$
the compressed sensing matrix such that  \eqref{eq:cs-m} can be rewritten in the form $\Ao \boldm = \by$.

As proposed  in Subsection \ref{sec:sparsifying} we use
$\To  = \partial_r (r\Ho_r\partial_r)$ as a sparsifying transform for the spherical means.
An approximation to $\partial_r r \Ho_r \partial_r \Mo  f$ can be obtained from compressed sensing measurements in combination via $\ell^1$-minimization.
For the recovery of the original function from the completed measurements, we use one of the inversion formulas of \cite{finch2007inversion}
presented given in Theorem \ref{thm:fbp}. Recall that this inversion formulas  can be implemented  by applying the circular back-projection to the filtered spherical means
$r \Ho_r\partial_r \Mo f$.

In order to obtain an approximation to the data \eqref{eq:sm-disc} from the sparse intermediate
reconstruction   $\partial_r (r \Ho_r \partial_r \Mo  f)(\edot , r)$, one has to perform one numerical integration along  the second
dimension after  the $\ell^1$-minimization process. We found that this numerical integration introduces artifacts in the reconstruction of $r \Ho_r \partial_r \Mo  f$, required for application of the  inversion formula~\eqref{eq:fbp},  see the middle image
in Figure \ref{fig:discCS}.
These artifacts also  yield some undesired blurring in the final reconstruction of $f$; see the middle image in Figure~\ref{fig:discCS2}.

\begin{figure}[tbh!]
\centering
\includegraphics[width=0.3\textwidth, height=0.3\textwidth]{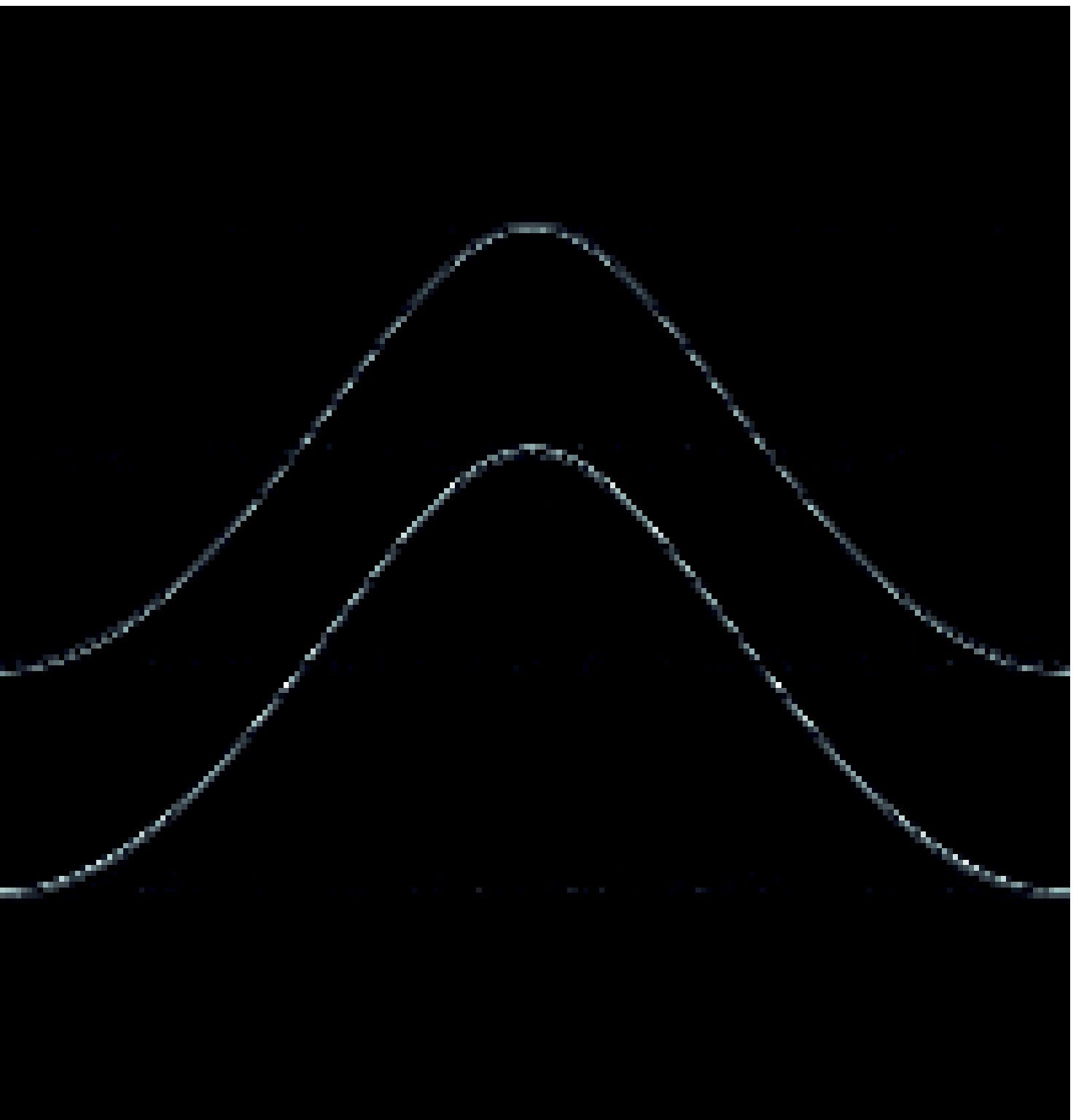}
\quad
\includegraphics[width=0.3\textwidth, height=0.3\textwidth]{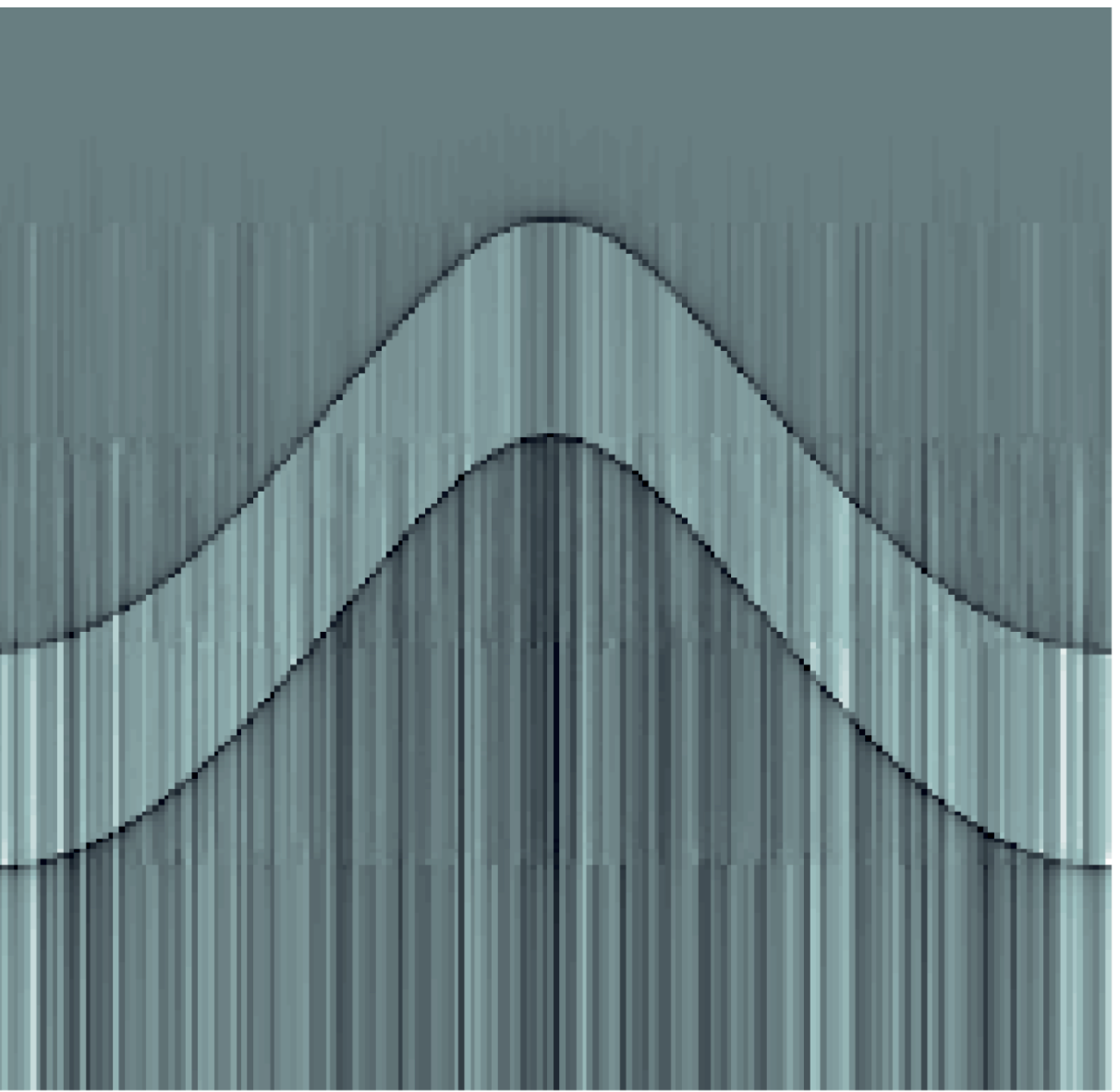}
\quad
\includegraphics[width=0.3\textwidth, height=0.3\textwidth]{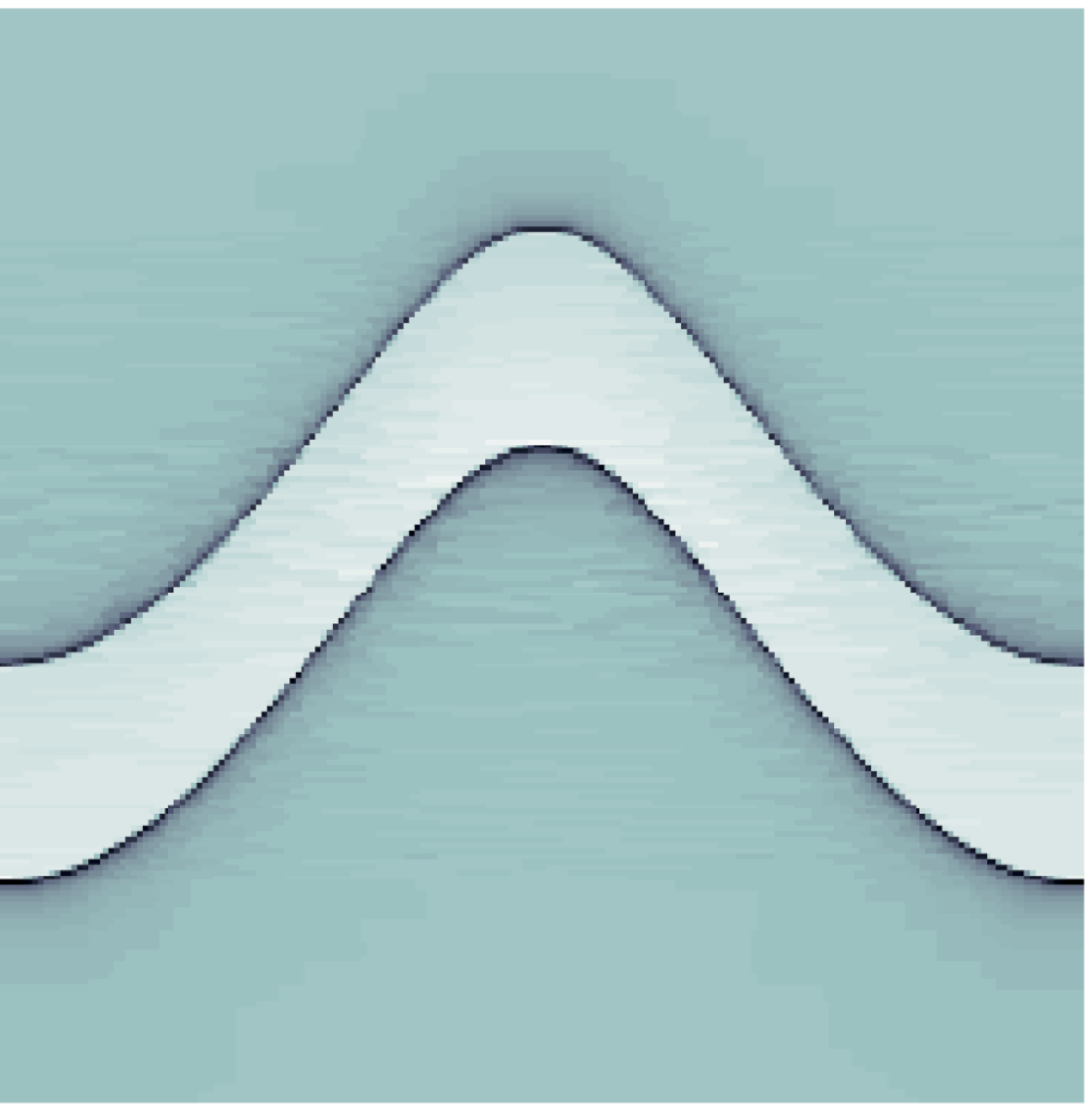}
\caption{\textsc{Reconstruction of filtered spherical means for $N=200$ and $m=100$.}
Left: Reconstruction of $\partial_r r \Ho_r\partial_r  \Mo f$  using $\ell^1$-minimization. Center: Result of integrating the $\ell^1$-reconstruction in the radial direction.
Right:  Result of directly recovering $r \Ho_r\partial_r  \Mo f$ by TV-minimization.}
\label{fig:discCS}
\end{figure}

In order to overcome the artifact introduced by the numerical integration, instead of applying an additional radial derivative to
$r\Ho_r\partial_r$ to  obtain sparsity in the spatial direction,  we will apply one-dimensional total variation minimization (TV)
for directly recovering $(r \Ho_r \partial_r \Mo  f)(\edot , r)$.
Thereby we avoid performing numerical integration on the reconstructed sparse data. Furthermore,  this yields much better results in terms of image quality of the final image $f$.

In our interpretation, this performance discrepancy is comparable to the difference between uniform and variable density samples for subsampled Fourier measurements. While \cite{candes2006robust} proves recovery of the discrete gradient, this does not carry over to the signal in a stable way -- a refined analysis was required \cite{needell2013stable, KW13}. Similarly, we expect that a refined analysis to be provided in subsequent work can help explain the quality gap between $\ell^1$ and TV minimization that we observe in our scenario.

In order to approximately recover
$r \Ho_r\partial_r \Mo f$ from the compressed
sensing measurements, we  perform,  for any $r \in [0,2\rho]$,
one-dimensional discrete TV-minimization
\begin{equation} \label{eq:tv-min}
	\norm{ \Ao \bq -  (r \Ho_r\partial_r \by) (r)}^2_2
	+   \la \norm{\bq}_{\rm TV}  \to \min_{\bq \in \R^N} \,.
\end{equation}
Here  $\norm{\bq}_{\rm TV}  = 2\pi/N \sum_{j=1}^{N} \abs{q_{j+1} - q_j}$ denotes the discrete total variation using the periodic extension $q_{N+1} := q_1$.
The one-dimensional total variation minimization  problem \eqref{eq:tv-min} can be  efficiently solved
using the fast iterative shrinkage thresholding algorithm (FISTA) of Beck and Teboulle~\cite{beck2009fast}.
The required proximal mapping for the total variation can be computed  in $\mathcal O(N)$ operation counts
by the  tautstring algorithm (see \cite{mammen1997locally,davies2001local,GraObe08}).
The approximate solution  of \eqref{eq:tv-min} therefore only requires  $\mathcal O(N m N_{\rm iter})$
floating point operations, with $N_{\rm iter}$ denoting the number of  iterations in the FISTA.
Assuming the radial variable to be discretized using $\mathcal O(N)$ samples,  the whole data completion procedure by \eqref{eq:tv-min} only requires $\mathcal O(N^2 m N_{\rm iter})$ operation counts.
Since we found that fewer than 100 iterations in the FISTA are often sufficient for accurate results,
the numerical effort of data completion is only a few times higher than that of standard reconstruction algorithms in PAT.

\begin{figure}[tbh!]
\centering
\includegraphics[width=0.3\textwidth, height=0.3\textwidth]{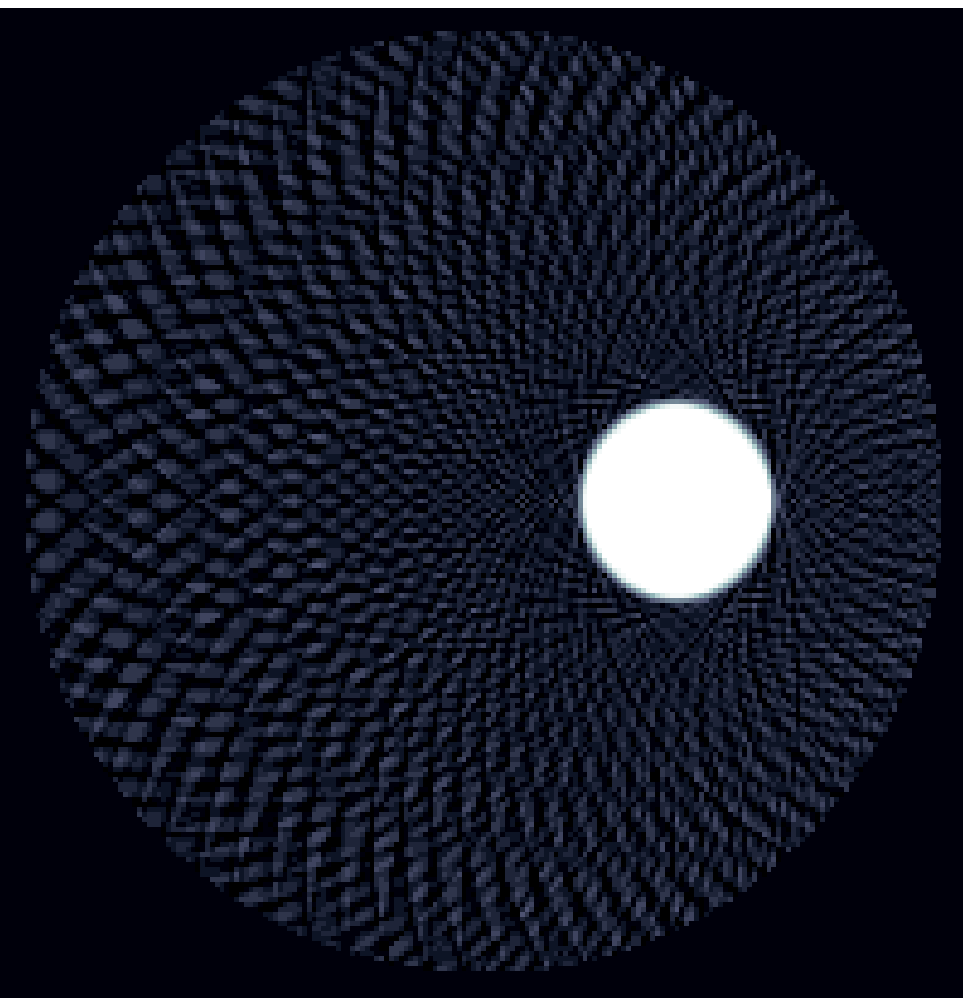}
\quad \includegraphics[width=0.3\textwidth, height=0.3\textwidth]{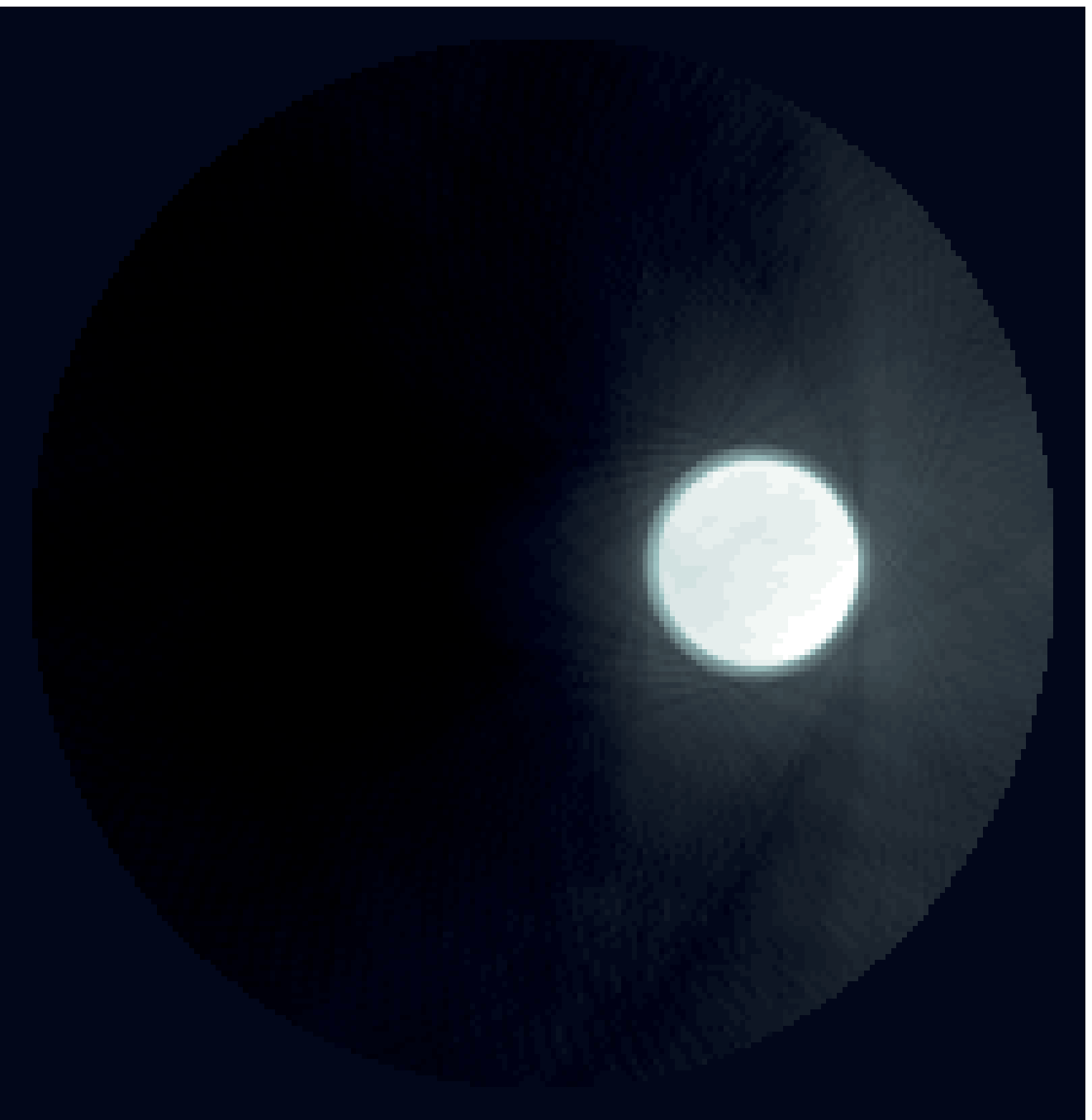}
\quad\includegraphics[width=0.3\textwidth, height=0.3\textwidth]{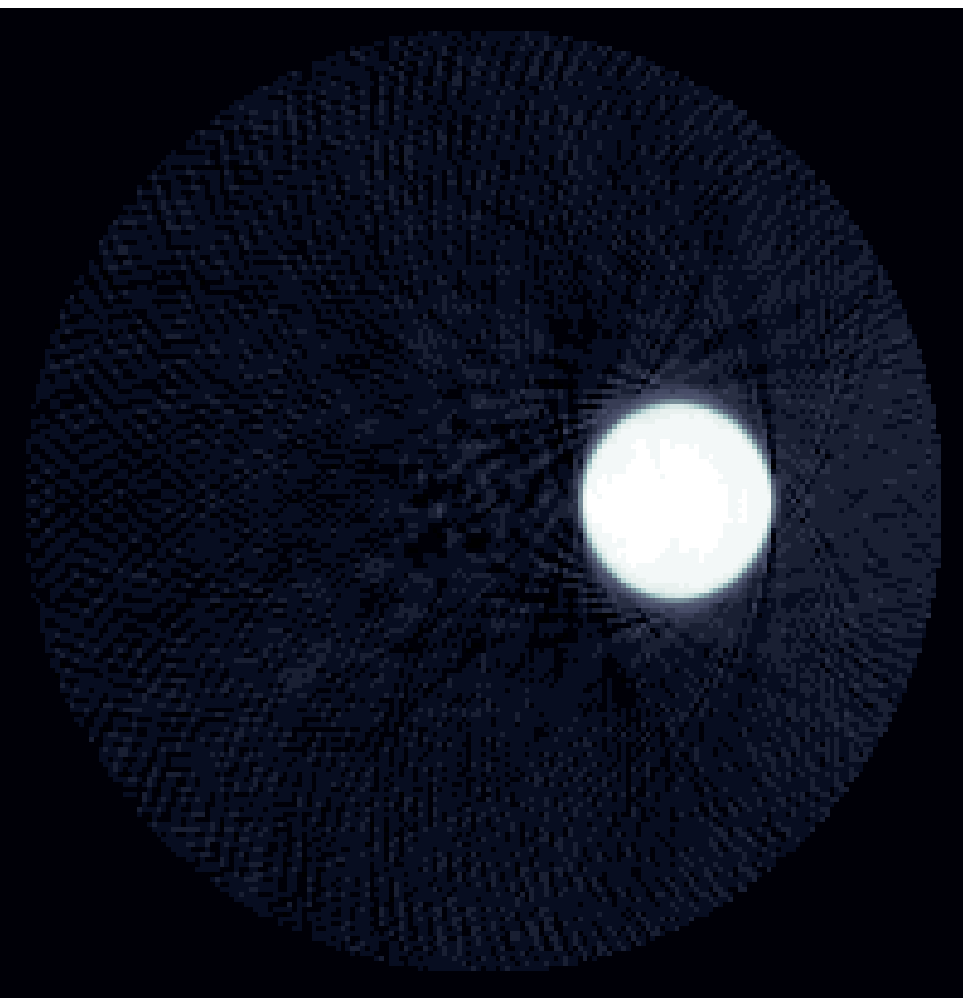}
\caption{\textsc{Reconstruction results for disc-like phantom using $N=200$ and $m=100$.}
Left: Reconstruction from 100 standard measurements. Center: Compressed sensing reconstruction using
 $\ell^1$-minimization. Right: Compressed sensing reconstruction using  TV-minimization. 
 The reconstruction from standard measurements contains under-sampling artifacts 
 which are not present in the compressed sensing reconstructions. Further, the use of 
 TV-minimization yields much less blurred results than the use of  $\ell^1$-minimization.}
\label{fig:discCS2}
\end{figure}

Figures \ref{fig:discCS} and \ref{fig:discCS2} show results of simulation studies for a simple phantom,  where the initial pressure distribution $f$ is the characteristic function of  a disc.
For the compressed sensing reconstruction, we used $m = 100$ random measurements instead of $N = 200$  standard point measurements. As one can see from the right image in Figure~\ref{fig:discCS2}, the completed data $r \Ho_r\partial_r  \Mo f$, for this simple phantom, is recovered almost perfectly from the compressed sensing measurements  by means of TV-minimization.  The reconstruction  results in Figure~\ref{fig:discCS2}
show that the combination of our compressed sensing approach with TV-minimization yields  much better results than  the use of
$\ell^1$-minimization.
Fur comparison purpose, the left image in Figure~\ref{fig:discCS2} shows the reconstruction from 100 standard measurements.
Observe that the use of $m = 100$ random measurements  yields better result than the use of the $100$ standard measurements, where artifacts due to spatial under-sampling are clearly visible.

\subsection{Results for real measurement data}

Experimental  data have been acquired by scanning a single integrating line detector on a circular path around a phantom.
A bristle with a diameter of $\unit[120]{\mu m}$ was formed to a knot and illuminated from two sides with pulses from a frequency doubled Nd:YAG laser with a wavelength of $\unit[532]{nm}$.  The radiant exposure for each side was below $\unitfrac[7.5]{mJ}{cm^{2}}$.
Generated photoacoustic signals have been detected by a graded index polymer optical fiber being part of a Mach-Zehnder interferometer, as described in \cite{GruJBO10}. Ultrasonic signals have been demodulated using a self-made balanced photodetector, the high-frequency output of which was sampled with a 12-bit data acquisition device. A detailed explanation of the used photo-detector and electronics can be found in \cite{BauSPIE13}. The polymer fiber detector has been scanned around the phantom on a circular path with a radius of $\unit[6]{mm}$
and photoacoustic signals have been acquired on 121 positions. The scanning curve was not closed, but had an opening angle of
$\unit[\pi/2]{rad}$. Hence photoacoustic signals have been acquired between $\unit[\pi/8]{rad}$ and $\unit[15\pi/8]{rad}$.

Using these experimental data,  compressed sensing data have been generated, where each detector location was used $d=10$ times and $m = 60$ measurements are made in total.
The reconstruction of the complete measurement data has been obtained by one-dimensional discrete TV-minimization \eqref{eq:tv-min} as suggested in the Section \ref{sec:sim}. The measured and the recovered complete  pressure data are shown in the top row in Figure \ref{fig:PatRec}.
The bottom row in Figure \ref{fig:PatRec} shows the reconstruction results from 121 standard measurements (bottom left)  and the reconstruction from 60 compressed sensing measurements (bottom center).
Observe that there is only a small difference between the reconstruction results. This clearly
demonstrates the potential of our compressed sensing scheme \ref{cs1}--\ref{cs5} for decreasing the number of measurements while keeping image quality. 
For comparison purpose we also display the reconstruction using  60 standard measurement  (bottom right). Compared  to the compressed sensing reconstruction using the same number of measurements the use of standard measurements shows  significantly more artifacts which are due to spatial
under-sampling.

\begin{figure}[tbh!]
\centering
\includegraphics[height=0.4\textwidth,width =0.4\textwidth]{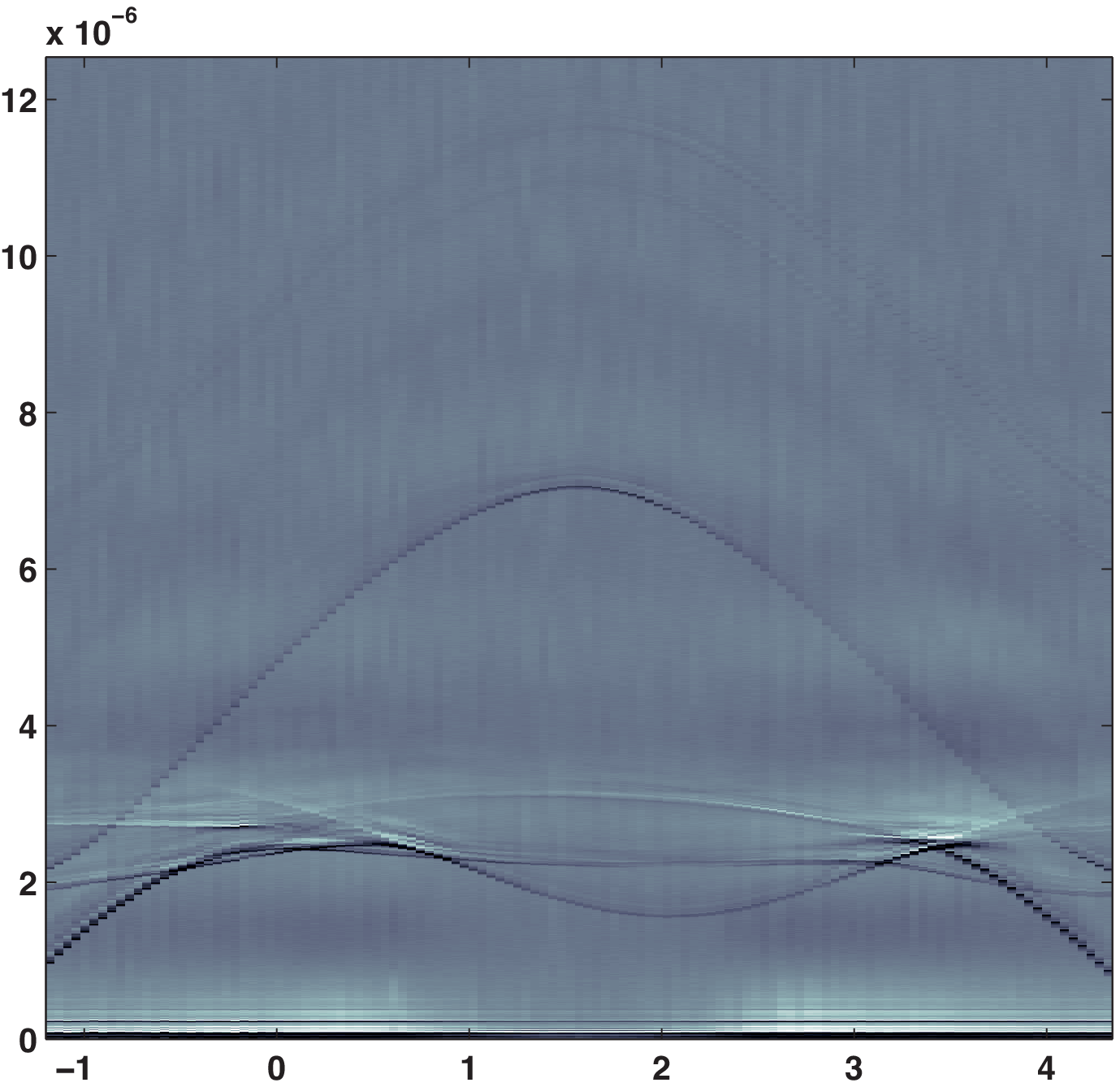}\qquad \includegraphics[height=0.4\textwidth,width =0.4\textwidth]{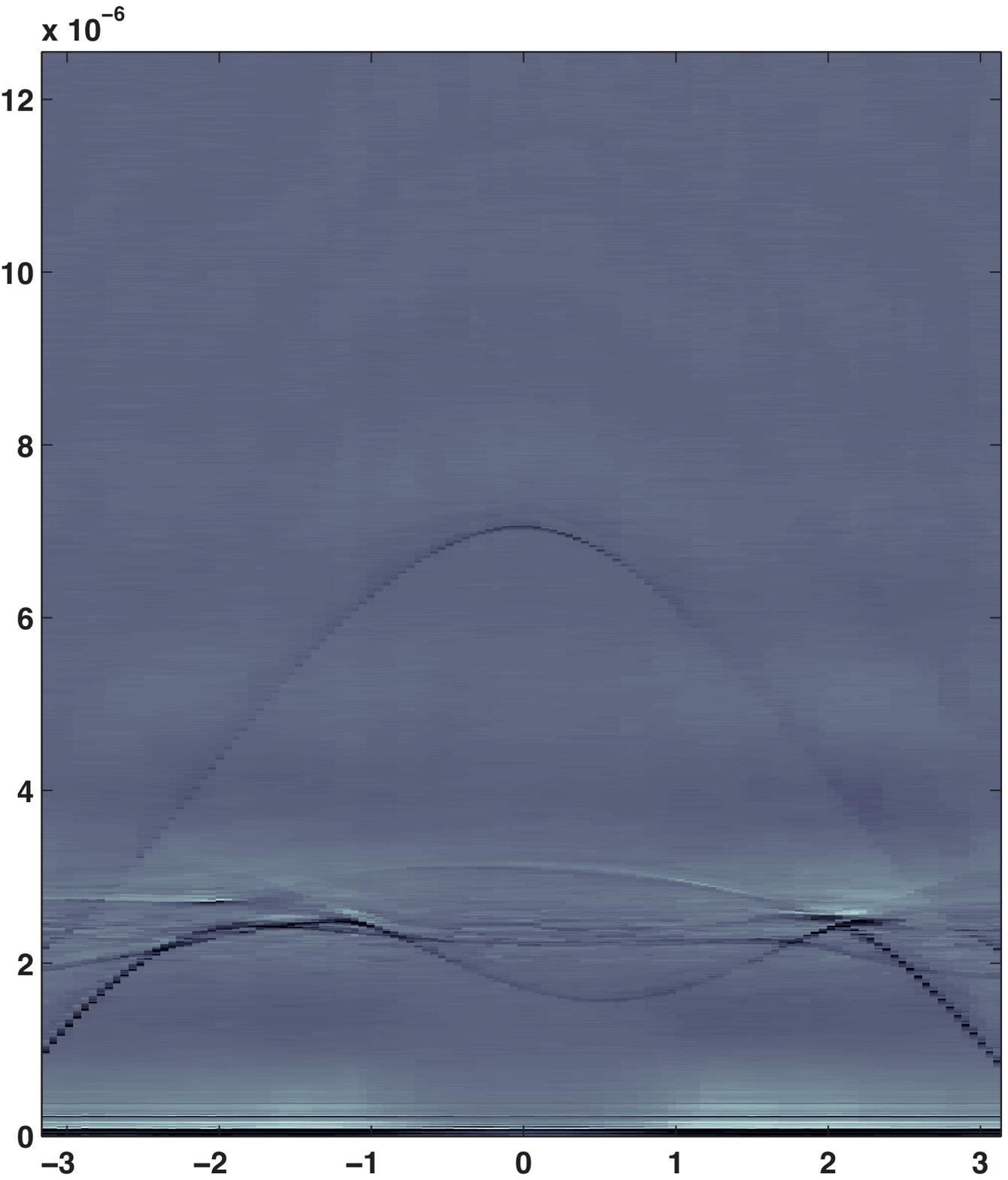}
\\[1em]
\includegraphics[width=0.2\textwidth,height =0.5\textwidth]{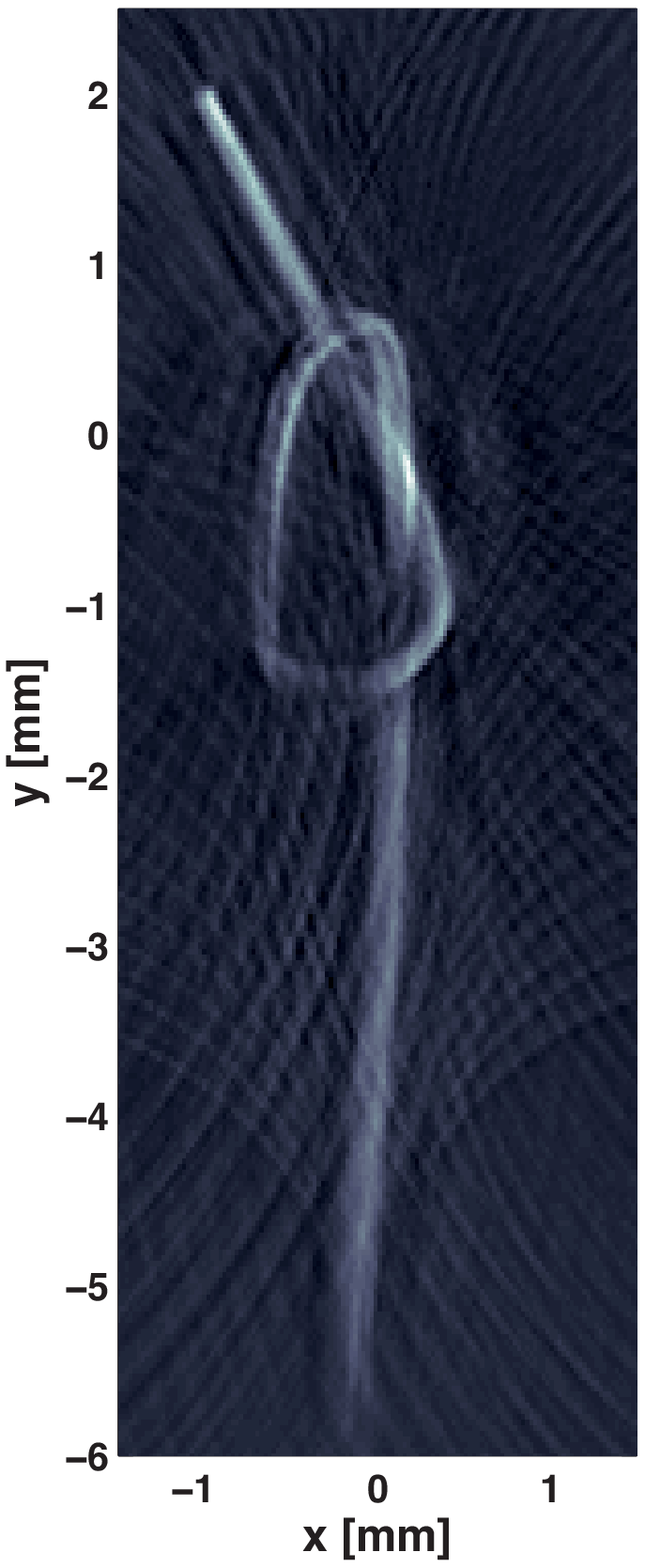}\qquad
\includegraphics[width=0.2\textwidth,height =0.5\textwidth]{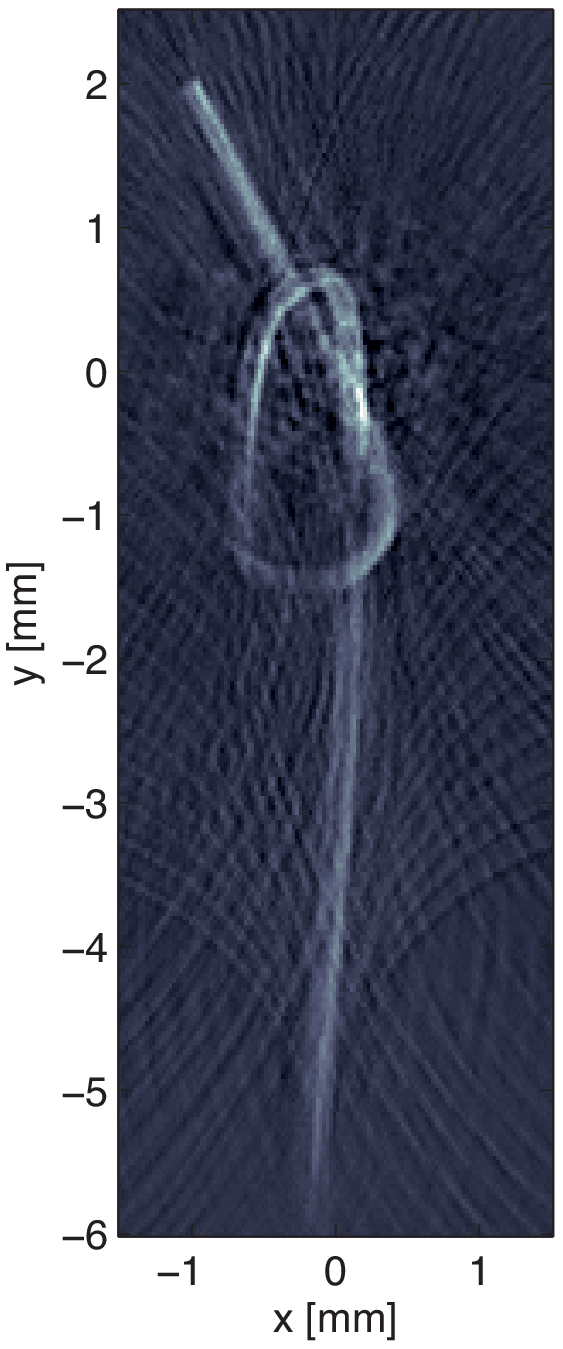}
\qquad
\includegraphics[width=0.2\textwidth,height =0.5\textwidth]{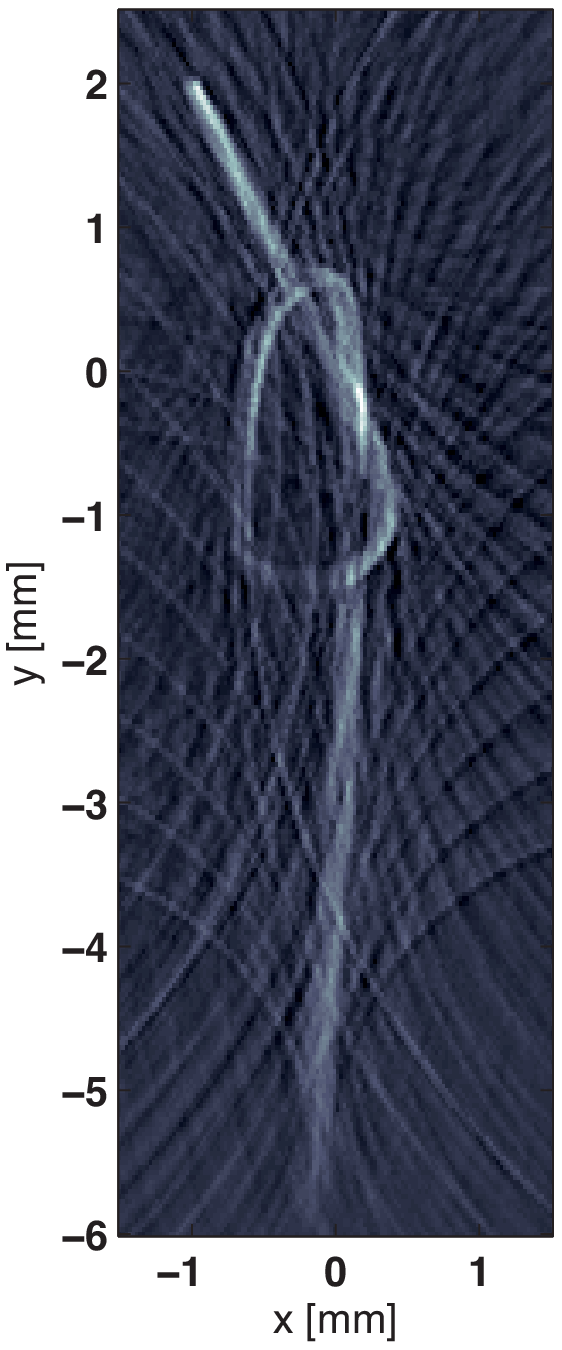}\caption{\textsc{Reconstruction results for PAT measurements.} Top left: Measured pressure data for $N=121$ detector positions. Top right: Compressed sensing reconstruction of the full pressure data from $m=60$ compressed measurements.
Bottom left: Reconstruction from the full measurement data using $N=121$ detector positions.
Bottom center: Reconstruction from $m = 60$ compressed sensing measurements. Bottom right: Reconstruction from half of the measurement data using $60$ detector positions. }
\label{fig:PatRec}
\end{figure}

\section{Discussion and outlook}
\label{sec:discussion}
In this paper, we proposed a novel approach to compressive sampling for photoacoustic tomography
using integrating line detectors providing recovery guarantees for suitable datasets.
Instead of measuring pressure data $p_j$ at any of the $N$ individual line detectors, our approach uses $m$ random combinations of $p_j$ with $m\ll N$ as measurement data. The reconstruction strategy  consists of first recovering the pressure values  $p_j$ for  $j \in \set{1, \dots, N}$  and then applying a standard PAT reconstruction for obtaining the final photoacoustic image.
For recovering the individual pressure data, we propose to apply a sparsifying transformation that acts in the temporal variable and makes the data sparse in the angular component.
After applying such a transform the complete pressure data is  recovered by solving a set of one dimensional $\ell^1$-minimization problems. This  decomposition also makes our  reconstruction algorithm numerically efficient.

Although we focused on PAT using integrating line detectors, we emphasize that a similar framework
can be developed for standard PAT based on the three dimensional wave equation and a two dimensional
array of point-like detectors. In such a situation, finding a sparsifying transform is even simpler. Recalling that $N$-shaped profile of the thermoacoustic pressure signal  induced by the
characteristic function of a ball suggests to use $\bp \mapsto \partial^2_t \bp$ as a sparsifying transform.

Note that the recovery guarantees in this paper crucially depend on the choice of an appropriate sparsifying transform.
In Subsection \ref{sec:sparsifying} we  proposed a candidate for
such a transformation that works well in our numerical
examples.  A more theoretical study of such sparsifying transforms and the resulting recovery guarantees
for simple (piecewise constant) phantoms is postponed
to further research. In this context, we will also investigate the use of different
sparsifying temporal transforms, such as the 1D wavelet
transform in the temporal direction. Further research includes using a fixed number of
detectors in each  measurement process. This requires novel
results for right $k$-regular expander graphs and
compressive sampling.

\section*{Acknowledgements}

The authors thank Johannes Bauer-Marschallinger and Karoline Felbermayer for performing measurements and for providing the experimental data. The work of M.~Sandbichler has been supported by a doctoral fellowship from the University of Innsbruck.
 The work of F.~Krahmer has been supported in parts by the German Science Foundation (DFG) in the context of the Emmy Noether Junior Research Group KR 4512/1-1 (RaSenQuaSI) and in parts by the German Federal Ministry of Education and Reseach (BMBF) through the cooperative research project ZeMat.
 T.~Berer and P.~Burgholzer have been supported by the Austrian Science Fund (FWF), project number S10503-N20, by the Christian Doppler Research Association, the Federal Ministry of Economy, Family and Youth, the European Regional Development Fund (EFRE) in the framework of the EU-program Regio 13, and the federal state of Upper Austria.

{\setlength{\parskip}{0.em}
\small

}

\end{document}